\newtheorem{remark}{Remark}[section]
\newtheorem{thm}{Theorem}[section]
\newtheorem{lem}{Lemma}[section]
\newcommand{\lt}{{L^2}}
\newcommand{\htw}{{H^2}}
\newcommand{\eps}{\varepsilon}
\newcommand{\spa}{\hspace{1cm}}
\newcommand{\Tone}{T^{(1)}(I_h \sigma^\varepsilon,I_h u^\varepsilon)}
\newcommand{\Ttwo}{T^{(2)}(I_h \sigma^\varepsilon,I_h u^\varepsilon)}
\newcommand{\vepsi}{{\varepsilon}}
\newcommand{\Ome}{{\Omega}}
\newcommand{\p}{{\partial}}
\newcommand{\Del}{{\Delta}}
\newcommand{\Div}{{\mbox{\rm div}}}
\begin{document}

\title{Mixed finite element methods for the
fully nonlinear Monge-Amp\`ere equation based on the vanishing moment
method\thanks{This work was partially supported by the NSF grants
DMS-0410266 and DMS-0710831.}}

\author{Xiaobing Feng\thanks{Department of Mathematics, The University of
Tennessee, Knoxville, TN 37996, U.S.A. (xfeng@math.utk.edu).}
\and
Michael Neilan\thanks{Department of Mathematics, The University of
Tennessee, Knoxville, TN 37996, U.S.A. (neilan@math.utk.edu).}
}

\maketitle

\setcounter{page}{1}


\begin{abstract}
This paper studies mixed finite element approximations of the viscosity
solution to the Dirichlet problem for the fully nonlinear
Monge-Amp\`ere equation $\det(D^2u^0)=f \,(>0)$ based on the
vanishing moment method which was proposed recently by the authors
in \cite{Feng2}. In this approach, the second order fully nonlinear
Monge-Amp\`ere equation is approximated by the fourth order
quasilinear equation $-\varepsilon\Delta^2 u^\varepsilon +
\det{D^2u^\varepsilon} =f$. It was proved in \cite{Feng1} that the
solution $u^\varepsilon$ converges to the unique convex viscosity
solution $u^0$ of the Dirichlet problem for the Monge-Amp\`ere
equation. This result then opens a door for constructing convergent
finite element methods for the fully nonlinear second order
equations, a task which has been impracticable before. The goal of
this paper is threefold. First, we develop a family of
Hermann-Miyoshi type mixed finite element methods for approximating
the solution $u^\varepsilon$ of the regularized fourth order
problem, which computes simultaneously $u^\vepsi$ and the moment
tensor $\sigma^\vepsi:=D^2u^\varepsilon$. Second, we derive error
estimates, which track explicitly the dependence of the error
constants on the parameter $\vepsi$, for the errors
$u^\varepsilon-u^\varepsilon_h$ and $\sigma^\vepsi-\sigma_h^\vepsi$.
Finally, we present a detailed numerical study on the rates of
convergence in terms of powers of $\vepsi$ for the error
$u^0-u_h^\vepsi$ and $\sigma^\vepsi-\sigma_h^\vepsi$, and
numerically examine what is the ``best" mesh size $h$ in relation to
$\vepsi$ in order to achieve these rates. Due to the strong
nonlinearity of the underlying equation, the standard perturbation
argument for error analysis of finite element approximations of
nonlinear problems does not work for the problem. To overcome the
difficulty, we employ a fixed point technique which strongly relies
on the stability of the linearized problem and its mixed finite
element approximations.
\end{abstract}

\begin{keywords}
Fully nonlinear PDEs, Monge-Amp\`ere type equations,
moment solutions, vanishing moment method, viscosity
solutions, mixed finite element methods, Hermann-Miyoshi element.
\end{keywords}

\begin{AMS}
65N30, 
65M60,  
35J60, 
53C45  
\end{AMS}

\pagestyle{myheadings}
\thispagestyle{plain}
\markboth{XIAOBING FENG AND MICHEAL NEILAN}{MIXED FINITE ELEMENT METHODS FOR
MONGE-AMP\'ERE EQUATIONS}

%
\section{Introduction}\label{sec-1}

This paper is the second in a sequence (cf. \cite{Feng3}) which
concerns with finite element approximations of viscosity solutions
of the following Dirichlet problem for the fully nonlinear
Monge-Amp\`ere equation (cf. \cite{Gutierrez01}):
\begin{alignat}{2}\label{monge1}
\det(D^2 u^0)&=f &&\qquad\text{in }\Omega\subset \mathbf{R}^n,\\
\label{monge2}
u^0&=g &&\qquad\text{on }\partial\Omega,
\end{alignat}
where $\Omega$ is a convex domain with
smooth boundary $\p\Ome$. $D^2 u^0(x)$ and $\det(D^2 u^0(x))$ denote
the Hessian of $u^0$ at $x\in \Ome$ and the determinant of $D^2 u^0(x)$.

The Monge-Amp\`ere equation is a prototype of fully
nonlinear second order PDEs which have a general form
\begin{equation}\label{generalPDE}
F(D^2 u^0, Du^0, u^0, x)=0
\end{equation}
with $F(D^2 u^0, Du^0, u^0, x)= \det(D^2 u^0)-f$.
The Monge-Amp\`ere equation arises naturally from differential geometry
and from applications such as mass transportation, meteorology,
and geostrophic fluid dynamics \cite{Benamou_Brenier00,Caffarelli_Milman99}.
It is well-known that for non-strictly convex domain $\Ome$ the above
problem does not have classical solutions in general even $f$, $g$
and $\p\Ome$ are smooth (see \cite{Gilbarg_Trudinger01}). Classical
result of A. D. Aleksandrov states that the Dirichlet problem
with $f>0$ has a unique generalized solution in the class of
convex functions (cf. \cite{Aleksandrov61,Cheng_Yau77}).
Major progress on analysis of problem \eqref{monge1}-\eqref{monge2}
has been made later after the introduction and establishment
of the viscosity solution theory (cf.
\cite{Caffarelli_Cabre95,Crandall_Ishii_Lions92,Gutierrez01}).
We recall that the notion of viscosity solutions was first introduced by
Crandall and Lions \cite{Crandall_Lions83} in 1983 for the
first order fully nonlinear Hamilton-Jacobi equations. It was
quickly extended to second order fully nonlinear PDEs, with
dramatic consequences in the wake of a breakthrough of Jensen's
maximum principle \cite{Jensen88} and the Ishii's discovery
\cite{Ishii89} that the classical Perron's method
could be used to infer existence of viscosity solutions.
To continue our discussion, we need to recall the definition
of viscosity solutions for the Dirichlet Monge-Amp\`ere
problem \eqref{monge1}-\eqref{monge2} (cf. \cite{Gutierrez01}).
\begin{definition}\label{def0}
a convex function $u^0\in C^0(\overline{\Ome})$ satisfying $u^0=g$
on $\p \Ome$ is called a {\em viscosity subsolution
(resp. viscosity supersolution)} of \eqref{monge1} if for
any $\varphi\in C^2$ there holds $\det(D^2\varphi(x_0))\leq f(x_0)$
(resp. $\det(D^2\varphi(x_0))\geq f(x_0)$) provided
that $u^0-\varphi$ has a local maximum (resp. a local minimum)
at $x_0\in \Ome$. $u^0\in C^0(\overline{\Ome})$ is called a {\em viscosity
solution} if it is both a {\em viscosity subsolution} and
a {\em viscosity supersolution}.
\end{definition}

It is clear that the notion of viscosity solutions is not variational.
It is based on a ``{\em differentiation by parts}" approach,  instead
of the more familiar integration by parts approach. As a result, it is
not possible to directly approximate viscosity solutions
using Galerkin type numerical methods such as finite element,
spectral and discontinuous Galerkin methods,
which all are based on variational formulations of PDEs. The
situation also presents a big challenge and paradox for
the numerical PDE community, since, on one hand, the
``{\em differentiation by parts}" approach has worked remarkably well
for establishing the viscosity solution theory for fully nonlinear
second order PDEs in the past two decades; on the other
hand, it is extremely difficult (if all possible) to mimic this
approach at the discrete level. It should be noted that unlike in the case
of fully nonlinear first order PDEs,  the terminology ``viscosity
solution" loses its original meaning in the case of fully nonlinear
second order PDEs.

Motivated by this difficulty and by the goal of developing
convergent Galerkin type numerical methods for fully nonlinear
second order PDEs, very recently we proposed in \cite{Feng1} a new
notion of weak solutions, called {\em moment solutions}, which is
defined using a constructive method, called the {\em vanishing moment
method}. The main idea of the vanishing moment method is to
approximate a fully nonlinear second order PDE by a quasilinear
higher order PDE. The notion of moment solutions and the vanishing
moment method are natural generalizations of the original definition
of viscosity solutions and the vanishing viscosity method introduced
for the Hamilton-Jacobi equations in \cite{Crandall_Lions83}. We now
briefly recall the definitions of moment solutions and the vanishing
moment method, and refer the reader to \cite{Feng1,Feng2} for a
detailed exposition.

The first step of the vanishing moment method is to approximate the
fully nonlinear equation \eqref{generalPDE} by the following quasilinear
fourth order PDE:
\begin{equation}\label{moment1}
-\vepsi\Delta^2 u^\eps+F(D^2 u^\eps, Du^\eps, u^\eps, x)
=0\spa \ \ (\eps >0),
\end{equation}
which holds in domain $\Ome$. Suppose the Dirichlet boundary condition
$u^0=g$ is prescribed on the boundary $\p\Ome$, then it is natural to impose the
same boundary condition on $u^\vepsi$, that is,
\begin{equation}\label{moment2}
u^\vepsi = g\qquad \text{on }\p\Ome.
\end{equation}
However, boundary condition \eqref{moment2} alone is not sufficient
to ensure uniqueness for fourth order PDEs.  An
additional boundary condition must be imposed.  In \cite{Feng1} the
authors proposed to use one of the following (extra) boundary conditions:
\begin{equation} \label{bc2}
\Delta u^\vepsi=\vepsi, \quad\text{or}\quad
D^2 u^\eps \nu\cdot \nu=\eps\spa \text{on}\ \partial\Omega,
\end{equation}
where $\nu$ stands for the unit outward normal to $\p\Ome$. Although
both boundary conditions work well numerically, the first boundary
condition $\Delta u^\vepsi=\vepsi$ is more convenient for standard
finite element methods, spectral and discontinuous Galerkin methods
(cf. \cite{Feng3}), while the second boundary condition $D^2 u^\eps
\nu\cdot \nu=\eps$ fits better for mixed finite element methods, and
hence, it will be used in this paper.

In summary, the vanishing moment method involves approximating
second order boundary value problem
\eqref{monge2}--\eqref{generalPDE} by fourth order boundary value
problem \eqref{moment1}--\eqref{moment2}, \eqref{bc2}. In the case
of the Monge-Amp\`ere equation, this means that we approximate
boundary value problem \eqref{monge1}--\eqref{monge2} by the
following problem:
\begin{alignat}{2}\label{pdeequation1}
-\vepsi\Delta^2 u^\eps +\text{det}(D^2 u^\eps)&=f \spa &&\text{in}\ \Omega,\\
u^\eps&=g\spa &&\text{on}\ \partial\Omega, \label{pdeequation2} \\
D^2u^\eps\nu\cdot\nu&=\eps\spa &&\text{on}\ \partial\Omega.
\label{pdeequation3}
\end{alignat}
It was proved in \cite{Feng2} that if $f> 0$ in $\Ome$ then
problem \eqref{pdeequation1}--\eqref{pdeequation3} has a unique solution
$u^\vepsi$ which is a strictly convex function over $\Ome$.
Moreover, $u^\vepsi$ uniformly converges as $\vepsi\to 0$ to the
unique viscosity solution of \eqref{monge1}--\eqref{monge2}. As a result,
this shows that \eqref{monge1}--\eqref{monge2} possesses a unique moment
solution that coincides with the unique viscosity solution.
Furthermore, it was proved that there hold the following a priori bounds
which will be used frequently later in this paper:
\begin{alignat}{2}\label{e1.10}
&\|u^\eps\|_{H^j} =O\bigl(\eps^{-\frac{j-1}{2}}\bigr), &&\qquad
 \|u^\eps\|_{W^{2,\infty}}=O\bigl(\eps^{-1}\bigr), \\
&\|D^2u^\eps\|_{L^2} =O\bigl(\eps^{-\frac12}\bigr), &&\qquad
\|\text{cof}(D^2u^\eps)\|_{L^\infty}=O\bigl(\eps^{-1}\bigr) \label{e1.11}
\end{alignat}
for $j=2,3$. Where $\text{cof}(D^2u^\eps)$ denotes the cofactor
matrix of the Hessian, $D^2u^\eps$.

With the help of the vanishing moment methodology, the original
difficult task of computing the unique convex viscosity solution of
the fully nonlinear Monge-Amp\`ere problem
\eqref{monge1}--\eqref{monge2}, which has multiple solutions (i.e.
there are non-convex solutions), is now reduced to a feasible task
of computing the unique regular solution of the quasilinear fourth
order problem \eqref{pdeequation1}--\eqref{pdeequation3}. This then
opens a door to let one use and/or adapt the wealthy amount of
existing numerical methods, in particular, finite element Galerkin
methods to solve the original problem \eqref{monge1}--\eqref{monge2}
via the problem \eqref{pdeequation1}--\eqref{pdeequation3}.

The goal of this paper is to construct and analyze a class of
Hermann-Miyoshi type mixed finite element methods for approximating
the solution of \eqref{pdeequation1}--\eqref{pdeequation3}. In
particular, we are interested in deriving error bounds that exhibit
explicit dependence on $\vepsi$. We note that finite element
approximations of fourth order PDEs, in particular, the biharmonic
equation, were carried out extensively in 1970's in the
two-dimensional case (see \cite{Ciarlet78} and the references
therein), and have attracted renewed interests lately for
generalizing the well-know $2$-D finite elements to the $3$-D case
(cf. \cite{Wang_Shi_Xu07,Wang_Xu07,Tai_Wagner00}) and for developing
discontinuous Galerkin methods in all dimensions (cf.
\cite{Feng_Karakashi07,Suli03}). Clearly, all these methods can be
readily adapted to discretize problem
\eqref{pdeequation1}--\eqref{pdeequation3} although their
convergence analysis do not come easy due to the strong nonlinearity
of the PDE \eqref{pdeequation1}.  
We refer the reader to \cite{Feng3,Neilan_thesis} for further
discussions in this direction.

A few attempts and results on numerical approximations of the
Monge-Amp\`ere as well as related equations have recently been reported in the
literature.  Oliker and Prussner \cite{Oliker_Prussner88}
constructed a finite difference scheme for computing Aleksandrov measure induced
by $D^2u$ in $2$-D and obtained the solution $u$ of problem
\eqref{pdeequation1}--\eqref{pdeequation3} as a by-product.
Baginski and Whitaker \cite{Baginski_Whitaker96}
proposed a finite difference scheme for Gauss curvature equation
(cf. \cite{Feng2} and the references therein) in $2$-D by mimicking the unique
continuation method (used to prove existence of the PDE) at the discrete level.
In a series of papers (cf. \cite{Dean_Glowinski06b} and the references therein)
Dean and Glowinski proposed an augmented Lagrange multiplier method
and a least squares method for problem \eqref{pdeequation1}--\eqref{pdeequation3}
and the Pucci's equation (cf. \cite{Caffarelli_Cabre95,Gilbarg_Trudinger01}) in
$2$-D by treating the Monge-Amp\`ere equation and Pucci's equation as a constraint
and using a variational criterion to select a particular solution.
Very recently, Oberman \cite{Oberman07} constructed some wide stencil finite
difference scheme which fulfill the convergence criterion established by Barles
and Souganidis in \cite{Barles_Souganidis91} for finite difference approximations of
fully nonlinear second order PDEs. Consequently, the convergence
of the proposed wide stencil finite difference scheme immediately follows
from the general convergence framework of \cite{Barles_Souganidis91}.
Numerical experiments results were reported in \cite{Oliker_Prussner88,Oberman07,
Baginski_Whitaker96,Dean_Glowinski06b}, however, convergence analysis was not
addressed except in \cite{Oberman07}.

The remainder of this paper is organized as follows.
In Section \ref{sec-2}, we first derive the Hermann-Miyoshi
mixed weak formulation for problem \eqref{pdeequation1}-\eqref{pdeequation3}
and then present our mixed finite element methods based on this
weak formulation. 
Section \ref{sec-3} is devoted to studying the linearization of
problem \eqref{pdeequation1}-\eqref{pdeequation3} and its mixed
finite element approximations. The results of this section, which
are of independent interests in themselves, will play a crucial role
in our error analysis for the mixed finite element introduced in
Section \ref{sec-2}. In Section \ref{sec-4}, we establish 
error estimates in the energy norm for the proposed mixed
finite element methods. Our main ideas are to use a fixed point
technique and to make strong use of the stability property of the
linearized problem and its finite element
approximations, which all are established in Section \ref{sec-3}. In
addition, we derive the optimal order error estimate in the
$H^1$-norm for $u^\vepsi-u^\vepsi_h$ using a duality argument.
Finally, in Section \ref{sec-5}, we first run some numerical tests
to validate our theoretical error estimate results, we then present
a detailed computational study for determining the ``best'' choice
of mesh size $h$ in terms of $\varepsilon$ in order to achieve the
optimal rates of convergence, and for estimating the rates of
convergence for both $u^0-u^\eps_h$ and $u^0-u^\eps$ in terms of
powers of $\eps$.

We conclude this section by remarking that standard space notations
are adopted in this paper, we refer to
\cite{Brenner,Gilbarg_Trudinger01,Ciarlet78} for their exact
definitions. In addition, $\Ome$ denotes a bounded domain in
$\mathbf{R}^n$ for $n=2,3$. $(\cdot,\cdot)$ and $\langle\cdot,
\cdot\rangle$ denote the $L^2$-inner products on $\Ome$ and on
$\p\Ome$, respectively. For a Banach space $B$, its dual space
is denoted by $B^*$.  $C$ is used to denote a generic
$\vepsi$-independent positive constant.

\section{Formulation of mixed finite element methods}\label{sec-2}

There are several popular mixed formulations for fourth order
problems (cf. \cite{Brezzi_Fortin92,Ciarlet78,Falk_Osborn80}).
However, since the Hessian matrix, $D^2u^\eps$ appears in
\eqref{pdeequation1} in a nonlinear fashion, we cannot use $\Delta
u^\eps$ alone as our additional variables, but rather we are forced
to use $\sigma^\eps:=D^2u^\eps$ as a new variable. Because of
this, we rule out the family of Ciarlet-Raviart mixed finite
elements (cf. \cite{Ciarlet78}). On the other hand, this observation
suggests to try Hermann-Miyoshi or Hermann-Johnson mixed elements
(cf. \cite{Brezzi_Fortin92,Falk_Osborn80}), which both seek
$\sigma^\eps$ as an additional unknown. In this paper, we shall only
focus on developing Hermann-Miyoshi type mixed methods.

We begin with a few more space notation:
\begin{alignat*}{2}
&V:=H^1(\Omega),\spa
&&W:=\{\mu\in [H^1(\Omega)]^{n\times n};\ \mu_{ij}=\mu_{ji}\},\\
&V_0:=H^1_0(\Ome), \spa 
&&V_g:=\{v\in V;\ v|_{\partial\Omega}=g\},\\
&W_\vepsi:=\{\mu\in W;\ \mu n\cdot n |_{\partial\Omega}=\varepsilon\}, \spa
&& W_0:=\{\mu\in W;\ \mu n\cdot n |_{\partial\Omega}=0\}.
\end{alignat*}

To define the Hermann-Miyoshi mixed formulation for problem
\eqref{pdeequation1}-\eqref{pdeequation3}, we rewrite the PDE into
the following system of second order equations:
\begin{eqnarray}\label{equationone}
\sigma^\eps-D^2u^\eps&=&0,\\
-\eps\Delta \text{tr}(\sigma^\eps)+\text{det}(\sigma^\eps)&=&f.
\label{equationtwo}
\end{eqnarray}
Testing \eqref{equationtwo} with $v\in V_0$ yields
\begin{equation}\label{formulation1}
\varepsilon\int_\Omega \text{div}(\sigma^\varepsilon)\cdot D v\, dx
+\int_\Omega \text{det}(\sigma^\eps) v\, dx=\int_\Omega f v\, dx.
\end{equation}
Multiplying \eqref{equationone} by $\mu\in W_0$ and integrating
over $\Omega$ we get
\begin{equation}\label{formulation2}
\int_\Omega \sigma^\eps:\mu\, dx
+\int_\Omega D u^\varepsilon\cdot \text{div}(\mu)\, dx
= \sum_{k=1}^{n-1}\int_{\partial\Omega}
\mu n\cdot\tau_k \frac{\partial g}{\partial \tau_k}\ ds,
\end{equation}
where $\sigma^\eps:\mu$ denotes the matrix inner product and
$\{\tau_1(x), \tau_2(x)\cdots, \tau_{n-1}(x)\}$ denotes the standard
basis for the tangent space to $\p\Ome$ at $x$.

From \eqref{formulation1} and \eqref{formulation2} we define the
variational formulation for \eqref{equationone}-\eqref{equationtwo} as follows:
Find $(u^\varepsilon,\sigma^\varepsilon)\in V_g\times W_\varepsilon$ such that
\begin{align}\label{prob1}
(\sigma^\varepsilon,\mu)+\bigl(\text{div}(\mu),Du^\eps\bigr)
&=\langle \tilde{g},\mu\rangle\spa \forall \mu\in W_0,\\
\bigl(\text{div}(\sigma^\eps),v\bigr)
+\frac{1}{\eps}\bigl(\text{det}\sigma^\eps,v\bigr)&=(f^\eps,v)\spa
\forall v\in V_0, \label{prob2}
\end{align}
where
\[
\langle \tilde{g},\mu\rangle =\sum_{i=1}^{n-1}
\Bigl\langle\frac{\partial g}{\partial \tau_i},\mu n\cdot \tau_i\Bigr\rangle
\quad\text{and}\quad f^\eps=\frac{1}{\eps} f.
\]

\begin{remark}
We note that  $\text{\rm det}(\sigma^\eps)= \frac{1}{n}\Phi^\eps
D^2u^\vepsi =\frac{1}{n}\sum_{i=1}^n
\Phi^\eps_{ij}u^\vepsi_{x_ix_j}$ for $j=1,2,...,n$, where $\Phi^\eps
=\text{\rm cof} (\sigma^\vepsi)$, the cofactor matrix of $\sigma^\vepsi:=D^2u^\eps$.
Thus, using the divergence free property of the cofactor matrix
$\Phi^\eps$ (cf. Lemma \ref{lem3.1}) we can define the following
alternative variational formulation for
\eqref{equationone}-\eqref{equationtwo}:
\begin{align*}
(\sigma^\varepsilon,\mu)+\bigl(\Div(\mu),Du^\eps\bigr)
&=\langle \tilde{g},\mu\rangle\spa \forall \mu\in W_0,\\
(\Div(\sigma^\eps),Dv)-\frac{1}{\eps n}\bigl(\Phi^\eps Du^\eps,Dv\bigr)
&=(f^\eps,v)\spa \forall v\in V_0.
\end{align*}
However, we shall not use the above weak formulation in this paper although
it is interesting to compare mixed finite element methods based on
the above two different but equivalent weak formulations.
\end{remark}

\medskip
To discretize \eqref{prob1}--\eqref{prob2}, let $T_h$ be a quasiuniform
triangular or rectangular partition of $\Ome$ if $n=2$ and be a quasiuniform
tetrahedral or $3$-D rectangular mesh if $n=3$. Let $V^h\subset H^1(\Omega)$
be the Lagrange finite element space consisting of continuous piecewise
polynomials of degree $k(\ge 2)$ associated with the mesh $T_h$. Let
\begin{alignat*}{2}
&V_g^h:=V^h\cap V_g,\spa &&V_0^h:=V^h\cap V_0, \\
&W_\varepsilon^h:=[V^h]^{n\times n}\cap W_\varepsilon,\spa
&&W^h_0:=[V^h]^{n\times n}\cap W_0.
\end{alignat*}

In the $2$-D case, the above choices of $V_0^h$ and $W_0^h$ are known as
the Hermann-Miyoshi mixed finite element for the biharmonic equation
(cf. \cite{Brezzi_Fortin92,Falk_Osborn80}). They form a stable pair
which satisfies the inf-sup condition. We like to note that it is easy to
check that the Hermann-Miyoshi mixed finite element also satisfies the inf-sup
condition in $3$-D. See Section \ref{sec-3.2} for the details.

Based on the weak formulation \eqref{prob1}-\eqref{prob2} and using the
above finite element spaces we now define our Hermann-Miyoshi type
mixed finite element method for \eqref{pdeequation1}--\eqref{pdeequation3}
as follows: Find $(u_h^\varepsilon,\sigma_h^\varepsilon)\in V^h_g\times W^h_\varepsilon$
such that
\begin{align}\label{prob3}
(\sigma^\varepsilon_h,\mu_h)+\bigl(\Div(\mu_h),Du^\eps_h\bigr)
&=\langle \tilde{g},\mu_h\rangle\spa \forall \mu_h\in W^h_0,\\
\bigl(\Div(\sigma^\eps_h),Dv_h\bigr)
+\frac{1}{\eps}\bigl(\det(\sigma^\eps_h),v_h\bigr)
&=(f^\eps,v_h)\spa \forall v_h\in V^h_0. \label{prob4}
\end{align}

Let $(\sigma^\eps,u^\eps)$ be the solution to
\eqref{prob1}-\eqref{prob2} and $(\sigma^\eps_h,u^\eps_h)$ solves
\eqref{prob3}-\eqref{prob4}. As mentioned in Section \ref{sec-1},
the primary goal of this paper is derive error estimates for
$u^\eps-u^\eps_h$ and $\sigma^\eps-\sigma^\eps_h$. To the end, we
first need to prove existence and uniqueness of
$(\sigma^\eps_h,u^\eps_h)$. It turns out both tasks are not easy to
accomplish due to the strong nonlinearity in \eqref{prob4}.  Unlike
in the continuous PDE case where $u^\vepsi$ is proved to be convex
for all $\vepsi$ (cf. \cite{Feng2}), it is far from clear if
$u^\eps_h$ preserves the convexity even for small $\vepsi$ and $h$.
Without a guarantee of convexity for $u^\eps_h$, we could not
establish any stability result for $u^\eps_h$. This in turn makes
proving existence and uniqueness a difficult and delicate task. In
addition, again due to the strong nonlinearity, the standard
perturbation technique for deriving error estimate for numerical
approximations of mildly nonlinear problems does not work here.
To overcome the difficulty, our idea is to adopt a
combined fixed point and linearization technique which was used by
the authors in \cite{Feng_Neilan_Prohl07}, where a nonlinear
singular second order problem known as the inverse mean curvature
flow was studied. We note that this combined fixed point and
linearization technique kills three birds by one stone, that is, it
simultaneously proves existence and uniqueness for $u^\eps_h$ and
also yields the desired error estimates.  In the next two sections,
we shall give the detailed account about the technique and realize
it for problem \eqref{prob3}-\eqref{prob4}.

\section{Linearized problem and its finite element approximations}\label{sec-3}
To build the necessary technical tools, in this section we shall
derive and present a detailed study of the linearization of
\eqref{prob1}-\eqref{prob2} and its mixed finite element approximations,
First, we recall the following divergence-free row property for the
cofactor matrices, which will be frequently used in later sections.
We refer to \cite[p.440]{evans} for a short proof of the lemma.
\begin{lem}\label{lem3.1}
Given a vector-valued function $\mathbf{v}=(v_1,v_2,\cdots,v_n):
\Ome\rightarrow \mathbb{R}^n$. Assume $\mathbf{v}\in [C^2(\Ome)]^n$.
Then the cofactor matrix $\text{\rm cof}(D\mathbf{v})$ of the
gradient matrix $D\mathbf{v}$ of $\mathbf{v}$ satisfies the
following row divergence-free property:
\begin{equation}\label{e3.1}
\Div (\text{\rm cof}(D\mathbf{v}))_i
=\sum_{j=1}^n \p_{x_j} (\text{\rm cof}(D\mathbf{v}))_{ij} =0 \qquad\text{for }
i=1,2,\cdots, n,
\end{equation}
where $(\text{\rm cof}(D\mathbf{v}))_i$ and
$(\text{\rm cof}(D\mathbf{v}))_{ij}$ denote respectively
the $i$th row and the $(i,j)$-entry of $\text{\rm cof}(D\mathbf{v})$.
\end{lem}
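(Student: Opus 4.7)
The plan is to verify \eqref{e3.1} by writing each cofactor entry in terms of the Levi--Civita symbol and then exploiting the clash between the antisymmetry of that symbol and the symmetry of mixed second partial derivatives. This is essentially the standard proof of the Piola identity, which is why Evans can afford to be brief.

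First I would rewrite the $(i,j)$-entry of the cofactor matrix in the index form
\[
(\text{cof}(D\mathbf{v}))_{ij} = \frac{1}{(n-1)!}\,\epsilon_{i\,i_2\cdots i_n}\,\epsilon_{j\,j_2\cdots j_n}\,\partial_{x_{j_2}}v_{i_2}\cdots \partial_{x_{j_n}}v_{i_n},
\]
with summation over the repeated indices $i_2,\dots,i_n$ and $j_2,\dots,j_n$, where $\epsilon$ is the totally antisymmetric Levi--Civita symbol. This identity follows from the Laplace expansion of the $(n-1)\times(n-1)$ minor defining $(\text{cof}(D\mathbf{v}))_{ij}$ combined with the sign factor $(-1)^{i+j}$, and expresses the cofactor as a polynomial in the first partial derivatives $\partial_{x_l}v_k$.

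Next I would apply $\partial_{x_j}$ to this expression and sum over $j\in\{1,\dots,n\}$. By Leibniz's rule the result is a sum over $k\in\{2,\dots,n\}$ of terms each containing exactly one second-order derivative of the form $\partial_{x_j}\partial_{x_{j_k}}v_{i_k}$, the remaining factors being first derivatives of $\mathbf{v}$. Because $\mathbf{v}\in [C^2(\Omega)]^n$, each such second derivative is symmetric under the swap $j\leftrightarrow j_k$, whereas the surviving Levi--Civita factor $\epsilon_{j\,j_2\cdots j_n}$ is antisymmetric under the same swap. Contracting a symmetric tensor against an antisymmetric one produces zero, so every term in the sum vanishes and \eqref{e3.1} follows for each $i=1,\dots,n$.

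The main obstacle is purely notational: one must track the indices carefully to confirm that the Leibniz expansion produces a single second-derivative factor per term and that this factor is contracted against the Levi--Civita symbol through precisely the two indices $(j,j_k)$ under which it is antisymmetric. A coordinate-free alternative would be to differentiate both sides of the identity $(D\mathbf{v})\,(\text{cof}(D\mathbf{v}))^{T}=\det(D\mathbf{v})\,I$ and combine this with Jacobi's formula for $\partial_{x_j}\det(D\mathbf{v})$, but the Levi--Civita route above is the shortest self-contained proof under the stated $C^2$ regularity.
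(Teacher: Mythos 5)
Your proof is correct. The paper gives no proof of this lemma, deferring to Evans \cite[p.~440]{evans}, and your Levi--Civita argument is just a compact rewriting of that standard proof (Evans writes the cofactor as a signed sum over permutations and cancels paired terms via the transposition $j\leftrightarrow j_k$, which is exactly your symmetric-against-antisymmetric contraction), so the approach is essentially the same as the referenced one.
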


\subsection{Derivation of linearized problem}\label{sec-3.1}
We note that for a given function $w$ there holds
\begin{equation*}
\det(D^2u^\eps+tw)=\det(D^2 u^\eps)+t\text{tr}(\Phi^\eps D^2 w)+\cdots
+t^n\text{det}(D^2 w).
\end{equation*}

Thus, setting $t=0$ after differentiating with respect to $t$ we
find the linearization of $M^\eps(u^\vepsi):=-\vepsi\Del^2 u^\vepsi
+\det(D^2u^\vepsi)$ at the solution $u^\eps$ to be
\[
L_{u^\eps}(w):=-\eps \Delta^2 w +\text{tr}(\Phi^\eps D^2 w)
=-\eps \Delta^2 w +\Phi^\eps:D^2 w=-\eps \Delta^2 w+\Div(\Phi^\eps Dw),
\]
where we have used \eqref{e3.1} with $\mathbf{v}=D u^\vepsi$.

We now consider the following linear problem:
\begin{align}\label{linear1}L_{u^\eps}(w)&=q\spa \text{in}\ \Omega,\\
\label{linear2}w&=0\spa \text{on}\ \partial\Omega,\\
\label{linear3} D^2 w\nu \cdot \nu&=0\spa \text{on}\
\partial\Omega.\end{align}

To introduce a mixed formulation for \eqref{linear1}-\eqref{linear3},
we rewrite the PDE as
\begin{align}
\chi-D^2 w&=0,\\
-\eps\Delta \text{tr}(\chi)+\Div(\Phi^\eps Dw)&=q.
\end{align}
Its variational formulation is then defined as:
Given $q\in  V^*_0$, find $(\chi,w)\in W_0\times V_0$ such that
\begin{alignat}{2}\label{linprob1}
(\chi,\mu)+(\Div(\mu),Dw)&=0 &&\spa\forall\mu\in W_0,\\
\label{linprob2} (\Div(\chi),Dv)-\frac{1}{\eps}(\Phi^\eps
Dw,Dv)&=\frac{1}{\eps}\langle q,v\rangle &&\spa\forall v\in V_0.
\end{alignat}

It is not hard to show that if $(\chi,w)$ solves
\eqref{linprob1}-\eqref{linprob2} then $w\in H^2(\Omega)\cap H^1_0(\Ome)$ should
be a weak solution to problem \eqref{linear1}-\eqref{linear3}.
%
On the other hand, by the elliptic theory for linear PDEs (cf. \cite{LU}),
we know that if $q\in V^*_0$, then the solution to problem
\eqref{linear1}-\eqref{linear3} satisfies $w\in H^3(\Omega)$,
so that $\chi=D^2w\in H^1(\Omega)$. It is easy to verify that
$(w,\chi)$ is a solution to \eqref{linprob1}-\eqref{linprob2}.

\subsection{Mixed finite element approximations of the linearized problem}\label{sec-3.2}

Our finite element method for \eqref{linprob1}-\eqref{linprob2} is
defined by seeking $(\chi_h,w_h)\in W^h_0\times V^h_0$ such that
\begin{alignat}{2}\label{linprob1h}
(\chi_h,\mu_h)+(\text{div}(\mu_h),Dw_h)&=0 &&\spa\forall\mu_h\in W^h_0,\\
\label{linprob2h}(\text{div}(\chi_h),Dv_h)-\frac{1}{\eps}(\Phi^\eps
Dw_h,Dv_h)&=\langle q,v_h\rangle &&\spa\forall v_h\in V_0^h.
\end{alignat}

The objectives of this subsection are to first prove existence and
uniqueness for problem \eqref{linprob1h}-\eqref{linprob2h} and
then derive error estimates in various norms. First,
we prove the following inf-sup condition for the
mixed finite element pair $(W^h_0,V^h_0)$.

\begin{lem}
For every $v_h\in V^h_0$, there exists a constant $\beta_0>0$, independent
of $h$, such that
\begin{equation}\label{operatorref2}
\sup_{\ \mu_h\in W^h_0}\ \frac{\bigl(\Div(\mu_h),Dv_h\bigr)}{\|\mu_h\|_{H^1}}
\ge \beta_0 \|v_h\|_{H^1}.
\end{equation}
\end{lem}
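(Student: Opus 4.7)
The plan is to exhibit, for each $v_h\in V_0^h$, an explicit test tensor $\mu_h\in W_0^h$ that realizes the inf-sup bound, rather than going through a Fortin-type interpolation argument. The key observation is that $W_0^h$ contains the ``trivial'' diagonal tensors built from scalar functions in $V_0^h$, because $W^h$ is taken to be fully $C^0$-conforming in each entry.

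First, given $v_h\in V_0^h$, I would take $\mu_h := v_h I$, where $I$ denotes the $n\times n$ identity matrix. Since the entries of $\mu_h$ are $v_h\delta_{ij}\in V^h$, the tensor $\mu_h$ is symmetric and lies in $[V^h]^{n\times n}$. Moreover $(\mu_h n\cdot n)|_{\partial\Omega}=v_h|_{\partial\Omega}=0$ because $v_h\in V_0^h\subset H^1_0(\Omega)$, so $\mu_h\in W_0^h$. A direct componentwise calculation gives $\Div(\mu_h)=Dv_h$ and $\|\mu_h\|_{H^1}=\sqrt{n}\,\|v_h\|_{H^1}$, hence
$$(\Div(\mu_h),Dv_h)=\|Dv_h\|_{L^2}^2.$$

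Second, I would invoke the Poincar\'e inequality on $H^1_0(\Omega)$: there exists $C_P=C_P(\Omega)>0$ such that $\|v_h\|_{H^1}\le C_P\|Dv_h\|_{L^2}$ for every $v_h\in V_0^h$. Combining this with the previous identity yields
$$\sup_{\mu_h\in W_0^h}\frac{(\Div(\mu_h),Dv_h)}{\|\mu_h\|_{H^1}}\ge\frac{\|Dv_h\|_{L^2}^2}{\sqrt{n}\,\|v_h\|_{H^1}}\ge\frac{1}{\sqrt{n}\,C_P^2}\,\|v_h\|_{H^1},$$
so $\beta_0:=(\sqrt{n}\,C_P^2)^{-1}$ is admissible and is manifestly independent of $h$ (and of $\varepsilon$).

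There is no genuine obstacle here beyond noticing that scalar multiples of the identity tensor are admissible test objects in $W_0^h$. In particular, because the authors define $W^h=[V^h]^{n\times n}\cap W$ with fully continuous Lagrange entries---rather than the classical Hermann--Miyoshi space in which only the $n\cdot\mu\cdot n$ component is continuous across interior edges---no Fortin-type interpolation operator is needed. The same construction works in every space dimension, which is consistent with the authors' remark that the three-dimensional inf-sup condition is immediate.
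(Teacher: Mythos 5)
Your proposal is correct and is essentially identical to the paper's own proof: the authors also take $\mu_h = v_h I_{n\times n}$, observe that $\bigl(\Div(\mu_h),Dv_h\bigr)=\|Dv_h\|_{L^2}^2$, and conclude via the Poincar\'e inequality. Your write-up is in fact slightly more careful, since you verify explicitly that $\mu_h\in W_0^h$ and track the factor $\sqrt{n}$ in $\|\mu_h\|_{H^1}$, which the paper glosses over.
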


\begin{proof}
Given $v_h\in V^h_0$, set $\mu_h=I_{n\times n} v_h$. Then
$\bigl(\Div(\mu_h),Dv_h\bigr)=\|Dv_h\|_\lt^2\ge \beta_0
\|v_h\|_{H^1}^2= \beta_0 \|v_h\|_{H^1}\|\mu_h\|_{H^1}$. Here we have
used Poincare inequality.
\end{proof}

\begin{remark}
By \cite[Proposition 1]{Falk_Osborn80}, \eqref{operatorref2} implies that
there exists a linear
operator $\Pi_h:\ W\to W^h$ such that
\begin{equation}\label{operator1}
\bigl(\Div(\mu-\Pi_h\mu),Dv_h\bigr)=0  \spa\forall v_h\in V^h_0,
\end{equation}
and for $\mu\in W\cap [H^r(\Omega)]^{n\times n},\ r\ge 1$, there holds
\begin{equation}\label{operator2}
\|\mu-\Pi_h\mu\|_{H^j}\le C h^{l-j}\|\mu\|_{H^l}\ \ \ j=0,1,\quad
1\le l\le \min\{k+1,r\}.
\end{equation}
We note that the above results were proved in the $2$-D case in \cite{Falk_Osborn80}, However, they also hold in the $3$-D case as \eqref{operatorref2} holds
in $3$-D.
\end{remark}

\begin{thm}\label{existence}
For any $q\in V^*_0$, there exists a unique solution
$(\chi_h,w_h)\in W_0^h\times V_0^h$ to problem \eqref{linprob1h}-\eqref{linprob2h}.
\end{thm}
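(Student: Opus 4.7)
The plan is to exploit the fact that \eqref{linprob1h}--\eqref{linprob2h} is a square linear system on the finite-dimensional space $W_0^h\times V_0^h$, so that existence follows from uniqueness. I would therefore set $q=0$ and prove that the only solution is $(\chi_h,w_h)=(0,0)$.

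First I would extract the natural energy identity by testing with $(\mu_h,v_h)=(\chi_h,w_h)$: equation \eqref{linprob1h} with $\mu_h=\chi_h$ gives
\[
\|\chi_h\|_{L^2}^2+\bigl(\Div(\chi_h),Dw_h\bigr)=0,
\]
while equation \eqref{linprob2h} with $v_h=w_h$ and $q=0$ gives
\[
\bigl(\Div(\chi_h),Dw_h\bigr)-\tfrac{1}{\eps}\bigl(\Phi^\eps Dw_h,Dw_h\bigr)=0.
\]
Subtracting the second identity from the first eliminates the off-diagonal term and yields the coercivity identity
\[
\|\chi_h\|_{L^2}^2+\tfrac{1}{\eps}\bigl(\Phi^\eps Dw_h,Dw_h\bigr)=0.
\]

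The crucial structural observation, and what I view as the main (though not difficult) obstacle, is that the second term is nonnegative. This follows from the fact, proved in \cite{Feng2} and recalled in Section \ref{sec-1}, that $u^\eps$ is strictly convex on $\Ome$; consequently $D^2u^\eps(x)$ is symmetric positive definite pointwise, and therefore so is its cofactor matrix $\Phi^\eps(x)=\text{cof}(D^2u^\eps(x))$ (since for any symmetric positive definite matrix $A$ one has $\text{cof}(A)=(\det A)A^{-1}$ with $\det A>0$). Hence $(\Phi^\eps(x)\xi,\xi)\ge 0$ for every $\xi\in\mathbf{R}^n$, with equality only when $\xi=0$.

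With both terms nonnegative and summing to zero, we conclude $\chi_h=0$ and $(\Phi^\eps Dw_h,Dw_h)=0$. Pointwise positive definiteness of $\Phi^\eps$ then forces $Dw_h=0$ almost everywhere, and the Dirichlet condition $w_h\in V_0^h$ gives $w_h=0$. (Alternatively, once $\chi_h=0$ is known, the first equation reduces to $(\Div(\mu_h),Dw_h)=0$ for all $\mu_h\in W_0^h$, and the inf-sup condition \eqref{operatorref2} directly implies $w_h=0$, which is a clean backup argument that avoids invoking pointwise nondegeneracy of $\Phi^\eps$.) This completes the uniqueness proof, and since \eqref{linprob1h}--\eqref{linprob2h} is a square linear system, existence follows. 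I expect the only delicate point in the write-up to be the justification that strict convexity of $u^\eps$ in fact gives pointwise, and not merely almost-everywhere, positive definiteness of $\Phi^\eps$—an issue completely resolved by the regularity $u^\eps\in H^3(\Ome)\subset C^1(\overline{\Ome})$ (in $n=2,3$) combined with the analysis of \cite{Feng2}.
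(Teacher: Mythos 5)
Your proof is correct and follows essentially the same route as the paper: reduce to uniqueness for the square finite-dimensional linear system, test with $(\chi_h,w_h)$, subtract to obtain $\|\chi_h\|_{L^2}^2+\frac{1}{\eps}(\Phi^\eps Dw_h,Dw_h)=0$, and invoke the positive definiteness of $\Phi^\eps$ coming from the strict convexity of $u^\eps$. The extra justifications you supply (the identity $\mathrm{cof}(A)=(\det A)A^{-1}$ and the inf-sup backup argument) are fine but not needed beyond what the paper already does.
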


\begin{proof}
Since we are in the finite dimensional case and the problem is
linear, it suffices to show uniqueness.  Thus, suppose
$(\chi_h,w_h)\in W_0^h\times V_0^h$ solves
\begin{align*}
(\chi_h,\mu_h)+(\text{div}(\mu_h),Dw_h)&=0\spa\forall\mu_h\in W_0^h,\\
(\text{div}(\chi_h),Dv_h)-\frac1\eps (\Phi^\eps Dw_h,Dv_h)&=0
\spa\forall v_h\in V_0^h.
\end{align*}
Let $\mu_h=\chi_h$, $v_h=w_h$ and subtract two equations to obtain
\begin{equation*}
(\chi_h,\chi_h)+\frac1\eps (\Phi^\eps Dw_h, Dw_h)=0.
\end{equation*}
Since $u^\eps$ is strictly convex, then $\Phi^\eps$ is positive definite.  Thus,
there exists $\theta>0$ such that
\begin{equation*}
\|\chi_h\|_\lt^2+\frac{\theta}{\eps} \|Dw_h\|_\lt^2\le 0.
\end{equation*}
Hence, $\chi_h=0,\ w_h=0$, and the desired result follows.
\end{proof}

\begin{thm}\label{thm0}
Let $(\chi, w)\in [H^r(\Omega)]^{n\times n}\cap W_0\times
H^r(\Omega)\cap V_0$ be the solution to
\eqref{linprob1}-\eqref{linprob2} and $(\chi_h, w_h)\in W^h_0\times
V^h_0$ solves \eqref{linprob1h}--\eqref{linprob2h}. Then there hold
\begin{align}\label{above7}
\|\chi-\chi_h\|_\lt&\le
C\eps^{-\frac32} h^{l-2}\bigl[\|\chi\|_{H^{l}}+\|w\|_{H^l}\bigr]\\
\label{above7a}\|\chi-\chi_h\|_{H^1}&\le
C \eps^{-\frac32} h^{l-3}\bigl[\|\chi\|_{H^{l}}+\|w\|_{H^l}\bigr]\\
\label{above9}\|w-w_h\|_{H^1}&\le
C \eps^{-3} h^{l-1}\bigl[\|\chi\|_{H^{l}}+\|w\|_{H^l}\bigr],\\
\nonumber \text{Moreover, for $k\geq 3$ there also holds}  \\
\label{above10a}\|w-w_h\|_\lt&\le
C\eps^{-5} h^{l} \bigl[\|\chi\|_{H^{l}}+\|w\|_{H^l}\bigr].
\end{align}
\end{thm}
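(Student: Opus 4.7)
The plan is to carry out standard saddle-point mixed finite element analysis, adapted with careful $\varepsilon$-bookkeeping, exploiting the positive definiteness of $\Phi^\varepsilon$ (coming from strict convexity of $u^\varepsilon$) against the fact that $\|\Phi^\varepsilon\|_{L^\infty} = O(\varepsilon^{-1})$ from \eqref{e1.11}. First I would split the errors using the projection $\Pi_h$ of \eqref{operator1}--\eqref{operator2} and the standard Lagrange interpolant $I_h$, writing
\[
\chi - \chi_h = (\chi - \Pi_h \chi) + (\Pi_h \chi - \chi_h) =: \rho_\chi + \eta_h,\qquad
w - w_h = (w - I_h w) + (I_h w - w_h) =: \rho_w + \xi_h.
\]
Subtracting \eqref{linprob1h}--\eqref{linprob2h} from \eqref{linprob1}--\eqref{linprob2} restricted to $(\mu_h,v_h)\in W_0^h\times V_0^h$ gives Galerkin orthogonality. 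Choosing $\mu_h = \eta_h$ and $v_h = \xi_h$, adding the two identities, and using the key orthogonality $(\Div \rho_\chi, D v_h) = 0$ from \eqref{operator1} to annihilate one of the projection terms, reduces the problem to the single energy identity
\[
\|\eta_h\|_{\lt}^2 + \tfrac{1}{\varepsilon}(\Phi^\varepsilon D\xi_h, D\xi_h) = -(\rho_\chi, \eta_h) - (\Div \eta_h, D\rho_w) - \tfrac{1}{\varepsilon}(\Phi^\varepsilon D\rho_w, D\xi_h).
\]

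Next, exploiting the positive definiteness of $\Phi^\varepsilon$ (as used in the proof of Theorem \ref{existence}) to bound the left-hand side below by $\|\eta_h\|_{\lt}^2 + (\theta/\varepsilon)\|D\xi_h\|_{\lt}^2$, I would attack the three right-hand side terms by Cauchy-Schwarz followed by Young's inequality, absorbing one half of each coercive quantity to the left and tracking the $\varepsilon$-powers picked up from $\|\Phi^\varepsilon\|_{L^\infty} = O(\varepsilon^{-1})$ and from the $\varepsilon$-dependence of $\theta$. The terms $\|\rho_\chi\|_{H^j}$ and $\|\rho_w\|_{H^j}$ are then handled by the approximation bounds \eqref{operator2} and by standard Lagrange interpolation estimates. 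Triangle inequality combining $\eta_h$ with $\rho_\chi$ and $\xi_h$ with $\rho_w$ then yields \eqref{above7} and \eqref{above9}. The $H^1$ bound \eqref{above7a} follows at once by an inverse estimate $\|\eta_h\|_{H^1} \le C h^{-1}\|\eta_h\|_{\lt}$ applied to the discrete part, combined with the $H^1$-projection estimate for $\rho_\chi$.

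For the $L^2$-estimate \eqref{above10a}, which requires $k\ge 3$ to get a sufficiently rich polynomial space, I would run a standard Aubin--Nitsche duality argument. Let $(\psi,z)\in W_0\times V_0$ be the mixed-form solution of the adjoint linearized problem $L_{u^\varepsilon}^* z = w - w_h$ with homogeneous Dirichlet/normal-Hessian boundary data, so that $z \in H^3(\Omega)\cap V_0$ and $\psi = D^2 z$ by elliptic regularity. Testing the error equations against $(\Pi_h\psi, I_h z)$ and then using $(\psi,z)$ as test functions in the defining identity for $w-w_h$, one rewrites $\|w-w_h\|_{\lt}^2$ as inner products of the primal errors with projection/interpolation errors of the dual pair; applying \eqref{operator2}, \eqref{above7}, \eqref{above9}, and the dual regularity estimate (which itself costs additional negative powers of $\varepsilon$) yields \eqref{above10a}.

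The main obstacle is the $\varepsilon$-bookkeeping. Each Young splitting against a factor carrying $\varepsilon^{-1}\Phi^\varepsilon$ costs an extra inverse power of $\varepsilon$, and the accumulated cost produces the stated $\varepsilon^{-3/2}$, $\varepsilon^{-3}$, $\varepsilon^{-5}$ prefactors. The two delicate points are (i) securing a usable, $\varepsilon$-explicit lower bound on the smallest eigenvalue of $\Phi^\varepsilon$ (which controls the coercivity constant $\theta$ and hence all subsequent Young inequalities), and (ii) verifying that the $\varepsilon$-dependence of the dual-problem regularity estimate in the $L^2$-duality step does not degrade further than claimed in \eqref{above10a}.
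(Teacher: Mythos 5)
Your overall architecture (splitting via $\Pi_h$ and $I_h$, Galerkin orthogonality, coercivity of $\frac1\eps(\Phi^\eps D\cdot,D\cdot)$, Young's inequality, then duality for the $L^2$ bound) matches the paper's proof for \eqref{above7}, \eqref{above7a}, and \eqref{above10a}. But there is a genuine gap at \eqref{above9}: the energy identity alone cannot produce the optimal rate $h^{l-1}$ for $\|w-w_h\|_{H^1}$. The right-hand side of your energy identity contains the term $\bigl(\Div(\Pi_h\chi-\chi_h),D(I_hw-w)\bigr)$, which you can only control by the inverse inequality $\|\Pi_h\chi-\chi_h\|_{H^1}\le Ch^{-1}\|\Pi_h\chi-\chi_h\|_\lt$; this costs a factor $h^{-1}$ against $\|D(I_hw-w)\|_\lt=O(h^{l-1})$, so after absorbing terms the energy argument delivers only $\|D(I_hw-w_h)\|_\lt\le C\eps^{-1}h^{l-2}$ (this is exactly the paper's intermediate estimate \eqref{above8}, which is suboptimal by one power of $h$). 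To upgrade to \eqref{above9} the paper runs a \emph{separate} duality argument already at the $H^1$ level: it introduces the auxiliary problem $-\eps\Delta^2 z+\Div(\Phi^\eps Dz)=-\Delta(w-w_h)$ with $H^3$ regularity $\|z\|_{H^3}\le C_b(\eps)\|D(w-w_h)\|_\lt$, $C_b(\eps)=O(\eps^{-1})$, and combines it with \eqref{above7}, \eqref{above7a} and \eqref{above8}. Your plan omits this step entirely, reserving duality only for the $L^2$ estimate, so as written you would prove \eqref{above9} with $h^{l-2}$ in place of $h^{l-1}$.

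A secondary, related inaccuracy: for the $L^2$ estimate \eqref{above10a} you posit only $H^3$ regularity for the dual solution $z$. That is not enough to gain two orders of $h$ over the (already duality-improved) $H^1$ bound; the paper must assume the dual problem is $H^4$-regular, with $\|z\|_{H^4}\le C_b(\eps)\|w-w_h\|_\lt$, precisely so that $\|D(z-I_hz)\|_\lt\lesssim h^3\|z\|_{H^4}$ can be paired with $\|\Div(\chi-\chi_h)\|_\lt=O(h^{l-3})$ to reach $O(h^l)$ (and this is also where the hypothesis $k\ge 3$ enters, so that $I_hz$ can exploit the $H^4$ smoothness). Your instinct about the two delicate points — the $\eps$-uniformity of the coercivity constant $\theta$ and the $\eps$-dependence of the dual regularity constants — is well placed, but the structural fix you need is the extra $H^1$-level duality argument.
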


\begin{proof}
Let $I_hw$ denote the standard finite element interplant of $w$ in $V^h_0$.  Then
\begin{align}\label{edit1}
&(\Pi_h\chi-\chi_h,\mu_h)+(\text{div}(\mu_h),D(I_hw-w_h))\\
&\hskip 1in  =(\Pi_h\chi-\chi,\mu_h)+(\Div(\mu_h),D(I_hw-w)),\nonumber\\
\label{edit2}
&(\Div(\Pi_h\chi-\chi_h),Dv_h)-\frac{1}{\eps}\bigl(\Phi^\eps D(I_hw-w_h),Dv_h)\\
&\hskip 1in  =-\frac{1}{\eps}\bigl(\Phi^\eps D(I_h w-w),Dv_h). \nonumber
\end{align}

Let $\mu_h=\Pi_h-\chi_h$ and $v_h=I_hw-w_h$ and subtract \eqref{edit2}
from \eqref{edit1} to get
\begin{align*}
&(\Pi_h\chi-\chi_h,\Pi_h\chi-\chi_h)+\frac{1}{\eps}\bigl(\Phi^\eps
D(I_hw-w_h),D(I_hw-w_h)\bigr)\\
&\hskip 0.7in =(\Pi_h\chi-\chi,\Pi_h\chi-\chi_h)
+\bigl(\Div(\Pi_h\chi-\chi_h),D(I_hw-w)\bigr) \\
&\hskip 1in +\frac{1}{\eps}\bigl(\Phi^\eps D(I_hw-w),D(I_hw-w_h)\bigr).
\end{align*}
Thus,
\begin{align*}
&\|\Pi_h\chi-\chi_h\|_\lt^2+\frac{\theta}{\eps}\|D(I_hw-w_h)\|_\lt^2\\
&\hskip 0.2in \le
\|\Pi_h\chi-\chi\|_\lt\|\Pi_h\chi-\chi_h\|_\lt
+\|\Pi_h\chi-\chi_h\|_{H^1}\|D(I_hw-w)\|_\lt \\
&\hskip 0.5in
+\frac{C}{\eps^2}\|D(I_hw-w)\|_\lt\|D(I_hw-w_h)\|_\lt\\
&\hskip 0.2in \le
\|\Pi_h\chi-\chi\|_\lt\|\Pi_h\chi-\chi_h\|_\lt
+Ch^{-1}\|\Pi_h\chi-\chi_h\|_\lt\|D(I_hw-w)\|_\lt \\
&\hskip 0.5in
+\frac{C}{\eps^2}\|D(I_hw-w)\|_\lt\|D(I_hw-w_h)\|_\lt,
\end{align*}
where we have used the inverse inequality.

Using the Schwarz inequality and rearranging terms yield
\begin{align}\label{above6}
\|\Pi_h\chi &-\chi_h\|_\lt^2 +\frac{1}{\eps}\|D(I_hw-w_h)\|_\lt^2 \\
&\le C\bigl(\|\Pi_h\chi-\chi\|_\lt^2+h^{-2}\|I_hw-w\|_{H^1}^2
+\eps^{-3}\|I_hw-w\|_{H^1}^2\bigr).\nonumber
\end{align}
Hence, by the standard interpolation results \cite{Brenner,Ciarlet78}
we have
\begin{align*}
\|\Pi_h\chi-\chi_h\|_\lt&\le
C\bigl(\|\Pi_h\chi-\chi\|_\lt+h^{-1}\|I_hw-w\|_{H^1}
+\eps^{-\frac32}\|I_hw-w\|_{H^1}\bigr)\\
&\le C\eps^{-\frac32} h^{l-2} \bigl(\|\chi\|_{H^l}+\|w\|_{H^l}\bigr),
\end{align*}
which and the triangle inequality yield
\begin{equation*}
\|\chi-\chi_h\|_\lt\le C \eps^{-\frac32} h^{l-2} \bigl(\|\chi\|_{H^{l}}
+\|w\|_{H^l}\bigr).
\end{equation*}
The above estimate and the inverse inequality yield
\begin{align*}
\|\chi-\chi_h\|_{H^1}&\le \|\chi-\Pi_h\chi\|_{H^1}+\|\Pi_h\chi-\chi_h\|_{H^1}\\
&\le \|\chi-\Pi_h\chi\|_{H^1}+h^{-1}\|\Pi_h\chi-\chi_h\|_\lt\\
&\le C \eps^{-\frac32} h^{l-3} \bigl(\|\chi\|_{H^{l}}+\|w\|_{H^l}\bigr).
\end{align*}

Next, from \eqref{above6} we have
\begin{align}\nonumber
\|D(I_hw-w_h)\|_\lt&\le \sqrt{\eps}
C\bigl[\|\Pi_h\chi-\chi\|_\lt+ h^{-1}\|D(I_hw-w)\|_\lt
+\eps^{-\frac32}\|I_hw-w\|_{H^1}\bigr]\\
\label{above8}
&\le C\eps^{-1} h^{l-2} \bigl(\|\chi\|_{H^{l}}+\|w\|_{H^l}\bigr).
\end{align}

To derive \eqref{above9}, we consider the following auxiliary problem: Find
$z\in H^2(\Ome)\cap H^1_0(\Omega)$ such that
\begin{alignat*}{2}
-\eps \Delta^2 z+\Div(\Phi^\eps Dz)&=-\Delta(w-w_h) &&\qquad\text{in } \Omega,\\
D^2 z\nu\cdot\nu &=0 &&\qquad\text{on } \p\Omega. 
\end{alignat*}
By the elliptic theory for linear PDEs (cf. \cite{LU}), we know that
the above problem has a unique solution $z\in H^1_0(\Ome)\cap H^3(\Ome)$ and
\begin{equation}\label{estimatelin}
\|z\|_{H^3}\le C_b(\eps)\|D(w-w_h)\|_\lt \quad\text{where}\quad
C_b(\eps)=O(\vepsi^{-1}).
\end{equation}
Setting $\kappa=D^2z$, it is easy to verify that $(\kappa,z)\in W_0\times V_0$ and
\begin{alignat*}{2}
(\kappa,\mu)+\bigl(\Div(\mu),Dz\bigr)&=0 &&\spa\forall \mu\in W_0,\\
\bigl(\Div(\kappa),Dv\bigr)-\frac{1}{\eps}\bigl(\Phi^\eps Dz,
Dv\bigr) &=\frac{1}{\eps}(D(w-w_h),Dv) &&\spa \forall v\in V_0.
\end{alignat*}
It is easy to check that \eqref{linprob1h}--\eqref{linprob2h}
produce the following error equations:
\begin{alignat}{2}
(\chi-\chi_h,\mu_h)+(\Div(\mu_h),D(w-w_h))&=0\spa
&&\forall \mu_h\in W^h_0,\\
(\Div(\chi-\chi_h),Dv_h)-\frac{1}{\eps}(\Phi^\eps D(w-w_h),Dv_h)&=0\spa
&&\forall v_h\in V^h_0.
\end{alignat}
Thus,
\begin{align*}
&\hspace{-0.4in}\frac{1}{\eps}\|D(w -w_h)\|_\lt^2=\bigl(\Div(\kappa),D(w-w_h)\bigr)
-\frac{1}{\eps}\bigl(\Phi^\eps Dz,D(w-w_h)\bigr)\\
&=\bigl(\Div(\kappa-\Pi_h\kappa),D(w-w_h)\bigr)-\frac{1}{\eps}\bigl(\Phi^\eps
D z,D(w-w_h)\bigr)\\
&\hspace{0.2in}+\bigl(\Div(\Pi_h \kappa),D(w-w_h)\bigr)\\
&=\bigl(\Div(\kappa-\Pi_h\kappa),D(w-I_hw)\bigr)-\frac{1}{\eps}\bigl(\Phi^\eps
Dz,D(w-w_h)\bigr)\\
&\hspace{0.2in}+\bigl(\chi_h-\chi,\Pi_h\kappa\bigr)\\
&=\bigl(\Div(\kappa-\Pi_h\kappa),D(w-I_hw)\bigr)-\frac{1}{\eps}\bigl(\Phi^\eps
Dz,D(w-w_h)\bigr)\\
&\hspace{0.2in}+\bigl(\chi_h-\chi,\Pi_h\kappa-\kappa\bigr)+\bigl(\chi_h-\chi,\kappa\bigr)\\
&=\bigl(\Div(\kappa-\Pi_h\kappa),D(w-I_hw)\bigr)-\frac{1}{\eps}\bigl(\Phi^\eps
Dz,D(w-w_h)\bigr)\\
&\hspace{0.2in}+\bigl(\chi_h-\chi,\Pi_h\kappa-\kappa\bigr)+\bigl(\Div(\chi-\chi_h),Dz\bigr)\\
&=\bigl(\Div(\kappa-\Pi_h\kappa),D(w-I_hw)\bigr)+(\chi_h-\chi,\Pi_h\kappa-\kappa)\\
&\hspace{0.2in}+(\Div(\chi-\chi_h),D(z-I_hz)\bigr)-\frac{1}{\eps}\bigr(\Phi^\eps D(w-w_h),D(z-I_hz)\bigr)\\
&\le
\|\Div(\kappa-\Pi_h\kappa)\|_\lt\|D(w-I_hw)\|_\lt+\|\chi_h-\chi\|_\lt\|\Pi_h\kappa-\kappa\|_\lt\\
&\hspace{0.2in}+\|\Div(\chi-\chi_h)\|_\lt\|D(z-I_hz)\|_\lt \\
&\hspace{0.2in} +\frac{C}{\eps^2}\|D(z-I_hz)\|_\lt\|D(w-w_h)\|_\lt\\
&\le C\Bigl[\|D(w-I_hw)\|_\lt +
h\|\chi_h-\chi\|_\lt+h^2\|\Div(\chi-\chi_h)\|_\lt\\
&\hspace{0.2in}+\frac{h^2}{\eps^2}\|D(w-w_h)\|_\lt\Bigr]\|z\|_{H^3}.
\end{align*}
Then, by \eqref{above7},\eqref{above7a},\eqref{above8}, and
\eqref{estimatelin}, we have
\begin{equation*}
\|D(w-w_h)\|_\lt\le C_b(\eps) \eps^{-2} h^{l-1}\bigl[\|\chi\|_{H^{l}}
+\|w\|_{H^l}\bigr].
\end{equation*}
Substituting $C_b(\eps)=O(\eps^{-1})$ we get \eqref{above9}.

To derive the $L^2$-norm estimate for $w-w_h$, we consider the following
auxiliary problem: Find $(\kappa,z)\in W_0\times V_0$ such that
\begin{alignat*}{2}
(\kappa,\mu)+\bigl(\Div(\mu),Dz\bigr)&=0 &&\spa\forall \mu\in W_0,\\
\bigl(\Div(\kappa),Dv\bigr)-\frac{1}{\eps}\bigl(\Phi^\eps Dz,
Dv\bigr) &=\frac{1}{\eps}(w-w_h,v) &&\spa \forall v\in V_0.
\end{alignat*}
Assume the above problem is $H^4$-regular, that is, $z\in H^4(\Ome)$ and
\begin{equation}
\label{estimatelin2}\|z\|_{H^4}\le C_b(\eps)\|w-w_h\|_\lt\quad\text{with}\quad
C_b(\vepsi)=O(\vepsi^{-1}).
\end{equation}
We then have
\begin{align*}
\frac{1}{\eps}\|w &-w_h\|_\lt^2=\bigl(\Div(\kappa),D(w-w_h)\bigr)
-\frac{1}{\eps}\bigl(\Phi^\eps D(w-w_h),Dz\bigr)\\
&=\bigl(\Div(\Pi_h\kappa),D(w-w_h)\bigr)
-\frac{1}{\eps}\bigl(\Phi^\eps D(w-w_h),Dz\bigr) \\
&\hspace{0.4in}
+\bigl(\Div(\kappa-\Pi_h\kappa),D(w-w_h\bigr)\\
&=(\chi_h-\chi,\Pi_h\kappa)-\frac{1}{\eps}(\Phi^\eps Dz, D(w-w_h)) \\
&\hspace{0.4in}+(\Div(\kappa-\Pi_h\kappa),D(w-I_hw))\\
&=(\chi_h-\chi,\kappa)+(\chi_h-\chi,\Pi_h\kappa-\kappa)\\
&\hspace{0.4in}
-\frac{1}{\eps}(\Phi^\eps Dz,D(w-w_h))+(\Div(\kappa-\Pi_h\kappa),D(w-I_hw))\\
&=(\Div(\chi-\chi_h),Dz)-\frac{1}{\eps}(\Phi^\eps D(w-w_h),Dz)\\
&\hspace{0.4in}+(\chi_h-\chi,\Pi_h\kappa-\kappa)+(\Div(\kappa-\Pi_h\kappa),D(w-I_hw))\\
&=(\Div(\chi-\chi_h),D(z-I_hz))-\frac{1}{\eps}(\Phi^\eps D(w-w_h),D(z-I_hz))\\
&\hspace{0.4in}+(\chi_h-\chi,\Pi_h\kappa-\kappa)+(\Div(\kappa-\Pi_h\kappa),D(w-I_hw))\\
&\le \bigl[\|\Div(\chi-\chi_h)\|_\lt+\frac{C}{\eps^2} \|D(w-w_h)\|_\lt \bigr]
\|D(z-I_hz)\|_\lt\\
&\hspace{0.2in}+\|\chi_h-\chi\|_\lt\|\Pi_h\kappa-\kappa\|_\lt+\|\Div(\kappa-\Pi_h\kappa)\|_\lt\|D(w-I_hw)\|_\lt\\
&\le
Ch^3\bigl[\|\chi-\chi_h\|_{H^1}+\frac{1}{\eps^2}\|w-w_h\|_{H^1}\bigr]\|z\|_{H^4}\\
&\hspace{0.4in}+Ch^2\|\chi_h-\chi\|_\lt\|\kappa\|_{H^2}
+Ch\|w-I_hw\|_{H^1} \|\kappa\|_{H^2} \\
&\le C\eps^{-5} h^l \bigl(\|\chi\|_{H^l}+\|w\|_{H^l}\bigr)\|z\|_{H^4}\\
&\le C C_b(\eps) \eps^{-5} h^l \bigl(\|\chi\|_{H^l} +\|w\|_{H^l}\bigr)\|w-w_h\|_\lt,
\end{align*}
where we have used \eqref{above7},\eqref{above7a},\eqref{above9},
\eqref{estimatelin2}, and the assumption $k\geq 3$.
Dividing the above inequality
by $\|w-w_h\|_\lt$ and substituting $C_b(\eps)=O(\eps^{-1})$
we get \eqref{above10a}.  The proof is complete.
\end{proof}

\section{Error analysis for finite element method \eqref{prob3}-\eqref{prob4}}
 \label{sec-4}
The goal of this section is to derive error estimates for the finite
element method \eqref{prob3}-\eqref{prob4}. Our main idea is to use
a combined fixed point and linearization technique which was used by
the authors in \cite{Feng_Neilan_Prohl07}.

\begin{definition}
Let $T: W^h_\eps\times V^h_g\to W^h_\eps\times V^h_g$ be a linear
mapping such that for any $(\mu_h,v_h)\in W^h_\eps \times V^h_g$,
$T(\mu_h,v_h)=(T^{(1)}(\mu_h,v_h),T^{(2)}(\mu_h,v_h))$ satisfies
\begin{align}\label{map1}
&\bigl(\mu_h-T^{(1)}(\mu_h,v_h),\kappa_h\bigr)+\bigl(\Div(\kappa_h),
D( v_h-T^{(2)}(\mu_h,v_h))\bigr)\\
&\spa\spa= (\mu_h,\kappa_h)+\bigl(\Div(\kappa_h),Dv_h\bigr)
-\langle \tilde{g},\kappa_h\rangle
\quad \forall \kappa_h\in W^h_0, \nonumber\\
&\bigl(\Div(\mu_h-T^{(1)}(\mu_h,v_h)),Dz_h\bigr)
-\frac{1}{\eps}\bigl(\Phi^\eps D(v_h-T^{(2)}(\mu_h,v_h)),Dz_h\bigr)
\label{map2}\\
&\spa\spa= \bigl(\Div(\mu_h),Dz_h \bigr)
+\frac{1}{\eps}\bigl(\det(\mu_h),z_h\bigr) -(f^\eps,z_h)
\quad \forall z_h\in V_0. \nonumber
\end{align}
\end{definition}

By Theorem \ref{existence},  we conclude that $T(\mu_h,v_h)$ is well
defined. Clearly, any fixed point $(\chi_h,w_h)$ of the mapping $T$
(i.e., $T(\chi_h,w_h)=(\chi_h,w_h)$) is a solution to problem
\eqref{prob3}-\eqref{prob4}, and vice-versa. The rest of this
section shows that indeed the mapping $T$ has a unique fixed point
in a small neighborhood of $(I_h\sigma^\eps,I_h u^\eps)$. To this
end, we define
\begin{align*}
\tilde{B}_h(\rho)&:=\{(\mu_h,v_h)\in W^h_\eps\times V^h_g;\
\|\mu_h-I_h\sigma^\eps\|_\lt
+ \frac{1}{\sqrt{\eps}}\|v_h-I_hu^\eps\|_{H^1}\le \rho\}. \\
\tilde{Z}_h&:=\{(\mu_h,v_h)\in W^h_\eps\times V^h_g;\
(\mu_h,\kappa_h)+(\Div(\kappa_h),Dv_h)=\langle
\tilde{g},\kappa_h\rangle\,\, \forall \kappa_h\in W^h_0\}. \\
B_h(\rho)&:=\tilde{B}_h(\rho)\cap \tilde{Z}_h.
\end{align*}
We also assume $\sigma^\eps\in H^r(\Omega)$ and set $l=\text{min}\{k+1,r\}$.

The next lemma measures the distance between the center of
$B_h(\rho)$ and its image under the mapping $T$.


\begin{lem}\label{lembound5}
The mapping $T$ satisfies the following estimates:
\begin{eqnarray}
&&\|I_h \sigma^\eps-\Tone\|_{H^1} \le
C_1(\eps)h^{l-3}\bigl[\|\sigma^\eps\|_{H^{l}}
+\|u^\eps\|_{H^l}\bigr], \label{eqtwo}\\
&&\|I_h \sigma^\eps-\Tone\|_\lt \le
C_2(\eps)h^{l-2}\bigl[\|\sigma^\eps\|_{H^{l}}
+\|u^\eps\|_{H^l}\bigr], \label{eqthree}\\
\label{eqone} &&\|I_h u^\eps-\Ttwo\|_{H^1} \le
C_3(\eps)h^{l-1}\bigl[\|\sigma^\eps\|_{H^l}+\|u^\eps\|_{H^l}\bigr].
\end{eqnarray}
\end{lem}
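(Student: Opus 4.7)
The plan is to treat $E_1:=I_h\sigma^\vepsi-\Tone$ and $E_2:=I_hu^\vepsi-\Ttwo$ as the finite element error of the linearized mixed problem of Section~\ref{sec-3} driven by explicit consistency residuals, and then to rerun the stability analysis behind Theorem~\ref{thm0}. First I would substitute $(\mu_h,v_h)=(I_h\sigma^\vepsi,I_hu^\vepsi)$ into the defining equations \eqref{map1}--\eqref{map2} of $T$ and subtract \eqref{prob1}--\eqref{prob2} tested against $\kappa_h\in W_0^h$ and $z_h\in V_0^h$. After simplification this yields the error equations
\begin{align*}
(E_1,\kappa_h)+(\Div\kappa_h,DE_2)
&= (I_h\sigma^\vepsi-\sigma^\vepsi,\kappa_h)+(\Div\kappa_h,D(I_hu^\vepsi-u^\vepsi)),\\
(\Div E_1,Dz_h)-\tfrac{1}{\vepsi}(\Phi^\vepsi DE_2,Dz_h)
&= (\Div(I_h\sigma^\vepsi-\sigma^\vepsi),Dz_h)+\tfrac{1}{\vepsi}\bigl(\det(I_h\sigma^\vepsi)-\det(\sigma^\vepsi),z_h\bigr),
\end{align*}
which is precisely the discrete linearized system of Section~\ref{sec-3.2} with right-hand side assembled from interpolation errors of $(\sigma^\vepsi,u^\vepsi)$ plus a determinant increment.

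Second, I would linearize the $\det$ increment via the matrix identity
$$
\det A-\det B=\Bigl(\int_0^1 \text{cof}\bigl(B+t(A-B)\bigr)\,dt\Bigr):(A-B),
$$
combined with $\|\text{cof}(\sigma^\vepsi)\|_{L^\infty}=O(\vepsi^{-1})$ from \eqref{e1.11} (and $L^\infty$-closeness of $I_h\sigma^\vepsi$ to $\sigma^\vepsi$ for $h$ small enough, so that all intermediate cofactor matrices are likewise $O(\vepsi^{-1})$), to deduce $\|\det(I_h\sigma^\vepsi)-\det(\sigma^\vepsi)\|_{L^2}\le C\vepsi^{-1}\|I_h\sigma^\vepsi-\sigma^\vepsi\|_{L^2}$. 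Then I would take $\kappa_h=E_1$ and $z_h=E_2$, subtract the two error equations, and exploit the strict convexity of $u^\vepsi$ (so that $\Phi^\vepsi$ is uniformly positive definite with constant $\theta>0$) to arrive at
$$
\|E_1\|_{L^2}^2+\tfrac{\theta}{\vepsi}\|DE_2\|_{L^2}^2\le(\text{four consistency terms}).
$$
Bounding these via Cauchy--Schwarz, Poincar\'e, the inverse estimate $\|\Div E_1\|_{L^2}\le Ch^{-1}\|E_1\|_{L^2}$, Young's inequality to absorb $\|E_1\|_{L^2}$ and $\|DE_2\|_{L^2}$ on the left, and the standard interpolation bounds for $\sigma^\vepsi$ and $u^\vepsi$ gives \eqref{eqthree}. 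The $H^1$-estimate \eqref{eqtwo} is then immediate from the inverse inequality $\|E_1\|_{H^1}\le Ch^{-1}\|E_1\|_{L^2}$.

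Finally, the energy identity only delivers an $h^{l-2}$ bound on $\|DE_2\|_{L^2}$, whereas \eqref{eqone} asks for $h^{l-1}$. To pick up the missing power of $h$, I would run a duality argument parallel to the one used for \eqref{above9}: introduce the auxiliary mixed problem associated with the adjoint linearized equation $\aoper{z}=-\Delta E_2$ in $\Ome$ with $z=0$ and $D^2z\nu\cdot\nu=0$ on $\p\Ome$, invoke its $H^3$-regularity with constant $C_b(\vepsi)=O(\vepsi^{-1})$ from \eqref{estimatelin}, and reproduce the error-equation manipulations used to establish \eqref{above9}, carrying along the extra determinant and consistency terms and invoking the already-derived estimates \eqref{eqtwo}--\eqref{eqthree}. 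The main obstacle will be bookkeeping of $\vepsi$-powers: each step absorbs a factor of $\vepsi^{-1}$ from the determinant remainder, another from $\|\Phi^\vepsi\|_{L^\infty}$, another from $C_b(\vepsi)$, and further negative powers from the blow-up of $\|\sigma^\vepsi\|_{H^l}$, $\|u^\vepsi\|_{H^l}$ in \eqml{e1.10}--\eqref{e1.11}, so the challenge is to weight Young's inequality carefully to keep $C_1,C_2,C_3$ as small as possible while still absorbing every term.
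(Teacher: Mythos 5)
Your proposal follows essentially the same route as the paper: the error equations obtained by inserting $(I_h\sigma^\vepsi,I_hu^\vepsi)$ into the definition of $T$ and subtracting \eqref{prob1}--\eqref{prob2}, a Mean Value Theorem linearization of the determinant increment controlled through the cofactor bounds \eqref{e1.11}, the coercivity of $\Phi^\vepsi$ plus inverse and interpolation estimates to get \eqref{eqthree} and then \eqref{eqtwo}, and a duality argument with the $H^3$-regular auxiliary linearized problem to upgrade the $h^{l-2}$ energy bound on $DE_2$ to the $h^{l-1}$ bound \eqref{eqone}. The only cosmetic difference is that you bound $\det(I_h\sigma^\vepsi)-\det(\sigma^\vepsi)$ in $L^2$ via $L^\infty$ control of the intermediate cofactors (which is $O(\vepsi^{-2})$, not $O(\vepsi^{-1})$, when $n=3$), whereas the paper bounds the intermediate cofactor $\Psi^\vepsi$ in $L^2$ and pairs it with $\|s_h\|_{H^1}$ by Sobolev embedding; this affects only the tracking of the $\vepsi$-powers in $C_i(\vepsi)$, not the validity of the stated lemma.
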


\begin{proof}
We divide the proof into four steps.

{\em Step 1}: To ease notation we set
$\omega_h=I_h\sigma^\eps-\Tone$, $s_h=I_hu^\eps-\Ttwo$.  By the
definition of $T$ we have for any $(\mu_h,v_h)\in W^h_0\times V^h_0$
\begin{align*}
&(\omega_h, \mu_h)+\bigl(\Div(\mu_h),Ds_h\bigr) =
(I_h\sigma^\eps,\mu_h)+\bigl(\Div(\mu_h),D(I_hu^\eps)\bigr)
-\langle \tilde{g},\mu_h\rangle,\\
&\bigl(\Div(\omega_h),Dv_h\bigr)-\frac{1}{\eps}\bigl(\Phi^\eps
Ds_h,Dv_h\bigr)=\bigl(\Div(I_h\sigma^\eps),Dv_h\bigr)
+\frac{1}{\eps} \bigl(\det(I_h\sigma^\eps),v_h\bigr) -(f^\eps,v_h).
\end{align*}
It follows from \eqref{prob1}--\eqref{prob2} that for any
$(\mu_h,v_h)\in W^h_0\times V^h_0$
\begin{align}\label{one}
&(\omega_h, \mu_h)+(\text{div}(\mu_h),Ds_h)
=(I_h\sigma^\eps-\sigma^\eps,\mu_h)
+\bigl(\Div(\mu_h),D(I_hu^\eps-u^\eps)\bigr),\\
&\bigl(\Div(\omega_h),Dv_h\bigr)-\frac{1}{\eps}\bigl(\Phi^\eps
Ds_h,Dv_h\bigr)=\bigl(\Div(I_h\sigma^\eps-\sigma^\eps),Dv_h\bigr)
\label{two}\\
&\hskip 2.4in +\frac{1}{\eps}
\bigl(\det(I_h\sigma^\eps)-\det(\sigma^\eps),v_h\bigr). \nonumber
\end{align}

Letting $v_h=s_h$, $\mu_h=\omega_h$ in \eqref{one}-\eqref{two},
subtracting the two equations and using the Mean Value Theorem we
get
\begin{align*}
(\omega_h,\omega_h)+\frac{1}{\eps}\bigl(\Phi^\eps Ds_h, Ds_h\bigr)
&=(I_h\sigma^\eps-\sigma^\eps, \omega_h)
+\bigl(\Div(\omega_h),D(I_hu^\eps-u^\eps)\bigr)\\
&\hspace{0.1in} +\bigl(\Div(\sigma-I_h\sigma^\eps),Ds_h\bigr)
+\frac{1}{\eps}\bigl(\det(\sigma^\eps)
-\text{det}(I_h\sigma^\eps),s_h\bigr)\\
&=(I_h\sigma^\eps-\sigma^\eps,\omega_h\bigr)
+\bigl(\Div(\omega_h),D(I_hu^\eps-u^\eps)\bigr)\\
&\hspace{0.1in}+\bigl(\Div(\sigma-I_h\sigma^\eps),Ds_h\bigr)
+\frac{1}{\eps}\bigl(\Psi^\eps:(\sigma^\eps-I_h\sigma^\eps),s_h\bigr),
\end{align*}
where $\Psi^\eps=\text{cof}(\tau I_h\sigma^\eps
+[1-\tau]\sigma^\eps)$ for $\tau\in [0,1].$

{\em Step 2: The case $n=2$}. Since $\Psi^\eps$ is a $2\times 2$
matrix whose entries are same as those of $\tau I_h\sigma^\eps
+[1-\tau]\sigma^\eps$,  then by \eqref{e1.11} we have
\begin{align*}
\|\Psi^\eps\|_\lt &=\|\text{cof}(\tau
I_h\sigma^\eps+[1-\tau]\sigma^\eps)\|_\lt
=\|\tau I_h\sigma^\eps+[1-\tau]\sigma^\eps\|_\lt\\
&\le \|I_h\sigma^\eps\|_\lt + \|\sigma^\eps\|_\lt \le
C\|\sigma^\eps\|_\lt=O(\eps^{-\frac12}).
\end{align*}

{\em Step 3: The case $n=3$}. Note that $(\Psi^\eps)_{ij}
=(\text{cof}(\tau
I_h\sigma^\eps+[1-\tau]\sigma^\eps))_{ij}=\text{det}(\tau
I_h\sigma^\eps |_{ij}+[1-\tau]\sigma^\eps |_{ij})$, where
$\sigma^\eps |_{ij}$ denotes the $2\times 2$ matrix after deleting
the $i$th row and $j$th column of $\sigma^\eps$. We can thus
conclude that
\begin{align*}
|(\Psi^\eps)_{ij}|&\le 2\max_{s\neq i,t\neq j} \bigl(
|\tau (I_h\sigma^\eps)_{st}+[1-\tau](\sigma^\eps)_{st}|\bigr)^2\\
&\le C \max_{s\neq i, t\neq j} |(\sigma^\eps)_{st}|^2 \le C
\|\sigma^\eps\|_{L^\infty}^2.
\end{align*}
Thus, \eqref{e1.11} implies that
\[
\|\Psi^\eps\|_\lt \le C \|\sigma^\eps\|_{L^\infty}^2=O(\eps^{-2}).
\]

{\em Step 4:} Using the estimates of $\|\Psi^\eps\|_{L^2}$ we have
\begin{align*}
\|\omega_h\|_\lt^2+\frac{\theta}{\eps}\|Ds_h\|_\lt^2 &\le
\|I_h\sigma^\eps-\sigma^\eps\|_\lt \|\omega_h\|_\lt
+\|\omega_h\|_{H^1}\|D(I_hu^\eps-u^\eps)\|_\lt\\
&\quad + \|I_h\sigma^\eps-\sigma^\eps\|_{H^1}\|Ds_h\|_\lt + C(\eps)
\|\sigma^\eps-I_h\sigma^\eps\|_{H^1}\|s_h\|_{H^1},
\end{align*}
where we have used Sobolev inequality. It follows from Poincare inequality,
Schwarz inequality, and the inverse inequality  that
\begin{align}\label{eqabove}
\|\omega_h\|_\lt^2+\frac{\theta}{\eps}\|s_h\|_{H^1}^2 &\le
C(\eps)\|I_h\sigma^\eps-\sigma^\eps\|_{H^1}^2
+C\|\omega_h\|_{H^1}\|I_hu^\eps-u^\eps\|_{H^1}\\
&\nonumber \le C(\eps)h^{2l-2}\|\sigma^\eps\|_{H^l}^2
+Ch^{-1}\|\omega_h\|_\lt\|I_hu^\eps-u^\eps\|_{H^1}.
\end{align}
Hence,
\begin{align*}
\|\omega_h\|_\lt^2+\frac{1}{\eps}\|s_h\|_{H^1}^2
&\le C(\eps)h^{2l-2}\|\sigma^\eps\|_{H^l}^2+Ch^{2l-4}\|u^\eps\|_{H^l}^2.
\end{align*}
Therefore,
\begin{equation*}
\|\omega_h\|_\lt\le C_2(\eps)h^{l-2}\bigl[\|\sigma^\eps\|_{H^l}
+\|u^\eps\|_{H^l}\bigr],
\end{equation*}
which and the inverse inequality yield
\begin{equation*}
\|\omega_h\|_{H^1}\le C_1(\eps)h^{l-3}\bigl[\|\sigma^\eps\|_{H^l}
+\|u^\eps\|_{H^l}\bigr].
\end{equation*}

Next, from \eqref{one} we have
\begin{align*}
(\Div(\mu_h),Ds_h)
&\le \|\omega_h\|_\lt\|\mu_h\|_\lt+\|I_h\sigma^\eps-\sigma^\eps\|_\lt\|\mu_h\|_\lt \\
&\qquad  +\|\Div(\mu_h)\|_\lt\|D(I_hu^\eps-u^\eps)\|_\lt\\
&\le C_2(\eps)h^{l-2}\bigl[\|\sigma^\eps\|_{H^l}
+\|u^\eps\|_{H^l}\bigr]\|\mu_h\|_{H^1}.
\end{align*}
It follows from \eqref{operatorref2} that
\begin{equation}
\label{shh1}\|Ds_h\|_\lt\le C(\eps)h^{l-2}\bigl[\|\sigma^\eps\|_{H^l}
+\|u^\eps\|_{H^l}\bigr].
\end{equation}

To prove \eqref{eqone}, let $(\kappa,z)$ be the solution to
\begin{alignat*}{2}
(\kappa,\mu)+(\Div(\mu),Dz)&=0\spa &&\forall \mu\in W_0,\\
(\Div(\kappa),Dv)-\frac{1}{\eps}(\Phi^\eps Dz,Dv)&=\frac{1}{\eps}(Ds_h,Dv)\spa
&&\forall v\in V_0,
\end{alignat*}
and satisfy
\[
\|z\|_{H^3}\le C_b(\eps)\|Ds_h\|_\lt.
\]
Then,
\begin{align*}
\frac{1}{\eps}\|Ds_h\|_\lt^2&=(\Div (\kappa),Ds_h)
-\frac{1}{\eps}(\Phi^\eps Dz,Ds_h)\\
&=(\Div (\Pi_h\kappa),Ds_h)-\frac{1}{\eps}(\Phi^\eps Dz,Ds_h)\\
&=-(\omega_h,\Pi_h\kappa)-\frac{1}{\eps}(\Phi^\eps
Dz,Ds_h)+(I_h\sigma^\eps-\sigma^\eps,\Pi_h\kappa)\\
&\hspace{0.2in}  +(\Div(\Pi_h\kappa),D(I_hu^\eps-u^\eps))\\
&=-(\omega_h,\kappa)+(\omega_h,\kappa-\Pi_h\kappa)
-\frac{1}{\eps}(\Phi^\eps Dz,Ds_h)\\
&\hspace{0.2in}+(I_h\sigma^\eps-\sigma^\eps,\Pi_h\kappa)
+(\Div(\Pi_h\kappa),D(I_hu^\eps-u^\eps))\\
&=(\Div(\omega_h),Dz)-\frac{1}{\eps}(\Phi^\eps
Ds_h,Dz)+(\omega_h,\kappa-\Pi_h\kappa)\\
&\hspace{0.2in}+(I_h\sigma^\eps-\sigma^\eps,\Pi_h\kappa)
+(\Div(\Pi_h\kappa),D(I_hu^\eps-u^\eps))\\
&=(\Div(\omega_h),D(z-I_hz))-\frac{1}{\eps}(\Phi^\eps Ds_h,D(z-I_hz))
+(\omega_h,\kappa-\Pi_h\kappa)\\
&\hspace{0.2in}+(I_h\sigma^\eps-\sigma^\eps,\Pi_h\kappa)
+(\Div(\Pi_h\kappa),D(I_hu^\eps-u^\eps))\\
&\hspace{0.2in}+(\Div(\sigma^\eps-I_h\sigma^\eps),I_hz)
+\frac{1}{\eps}(\text{det}(\sigma^\eps)-\text{det}(I_h\sigma^\eps),I_hz) \\
&\le \|\Div(\omega_h)\|_\lt\|D(z-I_hz)\|_\lt+\frac{1}{\eps}\|\Phi^\eps\|_{L^\infty}
\|Ds_h\|_\lt\|D(z-I_hz)\|_\lt \\
&\hspace{0.2in} +\|\omega_h\|_\lt\|\kappa-\Pi_h\kappa\|_\lt
+\|I_h\sigma^\eps-\sigma^\eps\|_\lt\|\Pi_h\kappa\|_\lt \\
&\hspace{0.2in} +\|\Div(\Pi_h\kappa)\|_\lt\|D(I_hu^\eps-u^\eps)\|_\lt\\
&\hspace{0.2in}+\|\Div(\sigma^\eps-I_h\sigma^\eps)\|_\lt\|I_hz\|_\lt
+\frac{C}{\eps}\|\Psi^\eps\|_\lt\|\sigma^\eps-I_h\sigma^\eps\|_{H^1}\|I_hz\|_{H^1}
\\
&\le Ch^2\bigl(\|\omega\|_{H^1}+\frac{1}{\eps^2}\|Ds_h\|_\lt\bigr)\|z\|_{H^3}
+C(\eps)h^{l-1}\bigl(\|I_hz\|_\lt+\|I_hz\|_{H^1}\bigr)\|\sigma^\eps\|_{H^l}\\
&\hspace{0.2in}+Ch\|\omega_h\|_\lt\|\kappa\|_{H^1}
+Ch^l\|\sigma^\eps\|_{H^l}\|\Pi_h\kappa\|_\lt
+Ch^{l-1}\|\Pi_h\kappa\|_{H^1}\|u^\eps\|_{H^l}
\end{align*}
\begin{align*}
&\le C_2(\eps)\eps^{-2} h^{l-1} \bigl[\|u^\eps\|_{H^l}
+\|\sigma^\eps\|_{H^l}\bigr]\|z\|_{H^3}\\
&\le C_2(\eps) \eps^{-2} C_b(\eps)h^{l-1} \bigl[\|u^\eps\|_{H^l}
+\|\sigma^\eps\|_{H^l}\bigr]\|Ds_h\|_\lt.
\end{align*}
Dividing by $\|Ds_h\|_\lt$, we get \eqref{eqone}.  The proof is complete.
\end{proof}

\begin{remark}
Tracing the dependence of all constants on $\vepsi$, we find that
$C_1(\eps)=O(1)$, $C_2(\eps)=O(1)$,
$C_3(\eps)=O(\eps^{-2})$ when $n=2$, and
$C_1(\eps)=O(\eps^{-\frac32})$,
$C_2(\eps)=O(\eps^{-\frac32})$,
$C_3(\eps)=O(\eps^{-\frac72})$ when $n=3$.
\end{remark}

The next lemma shows the contractiveness of the mapping $T$.
\begin{lem}\label{lembound6}
There exists an $h_0=o(\eps^\frac{19}{12})$ and
$\rho_0=o(\eps^{\frac{19}{12}}|\log h|^{n-3}h^{\frac{n}2-1})$, such
that for $h\le h_0$, $T$ is a contracting mapping in the ball
$B_h(\rho_0)$ with a contraction factor $\frac12$. That is, for any
$(\mu_h,v_h),\ (\chi_h,w_h)\in B_h(\rho_0)$ there holds
\begin{align}\label{contract}
\|T^{(1)}(\mu_h,v_h)-T^{(1)}(\chi_h,w_h)\|_\lt
&+\frac{1}{\sqrt{\eps}}\|T^{(2)}(\mu_h,v_h)-T^{(2)}(\chi_h,w_h)\|_{H^1} \\
& 
\le \frac{1}{2}\bigl(\|\mu_h-\chi_h\|_\lt
+\frac{1}{\sqrt{\eps}}\|v_h-w_h\|_{H^1}\bigr).  \nonumber
\end{align}
\end{lem}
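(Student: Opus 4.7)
The plan is to set $P_h := T^{(1)}(\mu_h,v_h)-T^{(1)}(\chi_h,w_h)\in W^h_0$ and $q_h := T^{(2)}(\mu_h,v_h)-T^{(2)}(\chi_h,w_h)\in V^h_0$, and to derive error equations by subtracting the two copies of \eqref{map1}-\eqref{map2}. After the $\tilde{g}$, $f^\eps$ and linear-divergence contributions cancel, $(P_h,q_h)$ satisfies a discrete linearized system whose second equation has two source terms: a linear $-\frac{1}{\eps}(\Phi^\eps D(v_h-w_h),Dz_h)$ coming from the $\Phi^\eps$-weighted piece in the definition of $T$, and a nonlinear $-\frac{1}{\eps}(\det(\mu_h)-\det(\chi_h),z_h)$. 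Both must be controlled by the ball-norm quantities $\|\mu_h-\chi_h\|_\lt$ and $\|v_h-w_h\|_{H^1}/\sqrt{\eps}$.

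I would then test with $\kappa_h=P_h$ and $z_h=q_h$ and subtract, mirroring the uniqueness argument of Theorem \ref{existence}, to obtain the energy identity
\[
\|P_h\|_\lt^2+\frac{1}{\eps}(\Phi^\eps Dq_h,Dq_h)=\frac{1}{\eps}(\Phi^\eps D(v_h-w_h),Dq_h)+\frac{1}{\eps}(\det(\mu_h)-\det(\chi_h),q_h);
\]
positive definiteness of $\Phi^\eps$ bounds the left side below by $\|P_h\|_\lt^2+\frac{\theta}{\eps}\|Dq_h\|_\lt^2$. For the first source I apply Cauchy--Schwarz together with the $L^\infty$ bound $\|\Phi^\eps\|_{L^\infty}=O(\eps^{-(n-1)})$ from \eqref{e1.11}. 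For the second I invoke the Mean Value Theorem, exactly as in Step 1 of the proof of Lemma \ref{lembound5}, to write $\det(\mu_h)-\det(\chi_h)=\Lambda_{\mu,\chi}:(\mu_h-\chi_h)$ with $\Lambda_{\mu,\chi}:=\int_0^1\text{cof}(t\mu_h+(1-t)\chi_h)\,dt$; writing $t\mu_h+(1-t)\chi_h=I_h\sigma^\eps+\tilde{\omega}_h$ with $\|\tilde{\omega}_h\|_\lt\le\rho_0$ and using the dimension-dependent discrete $L^2\hookrightarrow L^\infty$ inverse embedding (with a $|\log h|^{1/2}$ factor in 2D and an $h^{-3/2}$ factor in 3D), I bound $\|\Lambda_{\mu,\chi}\|_{L^\infty}$ by the leading $\eps^{-(n-1)}$ piece plus a $\rho_0$-controlled perturbation. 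Young's inequality with weight $\theta/(2\eps)$ absorbs the $\|Dq_h\|_\lt^2$ contributions and yields an estimate of the shape
\[
\|P_h\|_\lt+\frac{1}{\sqrt{\eps}}\|q_h\|_{H^1}\le\mathcal{C}_1(\eps)\,\frac{1}{\sqrt{\eps}}\|v_h-w_h\|_{H^1}+\mathcal{C}_2(\eps,h,\rho_0)\,\|\mu_h-\chi_h\|_\lt.
\]
Choosing $h_0$ and $\rho_0$ small enough to enforce $\mathcal{C}_1,\mathcal{C}_2\le\tfrac14$ then delivers the claimed contraction.

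The main obstacle will be the delicate bookkeeping of $\eps$-powers: the ellipticity constant $\theta$ of $\Phi^\eps$, the $O(\eps^{-(n-1)})$ size of $\|\Phi^\eps\|_{L^\infty}$ and of the leading cofactor contribution to $\Lambda_{\mu,\chi}$, and the dimension-dependent discrete inverse constants must all conspire to produce the precise exponent $\eps^{19/12}$. A secondary delicacy is that the perturbation $\tilde{\omega}_h$ is only controlled in $L^2$ through $\rho_0$ yet enters in $L^\infty$ inside $\Lambda_{\mu,\chi}$; this is exactly what forces $\rho_0$ to carry the dimension-dependent factor $|\log h|^{n-3}h^{n/2-1}$ on top of $\eps^{19/12}$.
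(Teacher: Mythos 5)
Your overall skeleton (subtract the two copies of \eqref{map1}--\eqref{map2}, test with the differences, apply the Mean Value Theorem to $\det(\mu_h)-\det(\chi_h)$, and use positive definiteness of $\Phi^\eps$) matches the paper's Step 1, but the way you propose to estimate the two source terms has a gap that defeats the contraction. If you bound $\frac{1}{\eps}(\Phi^\eps D(v_h-w_h),Dq_h)$ by Cauchy--Schwarz with $\|\Phi^\eps\|_{L^\infty}=O(\eps^{-1})$, then after Young's inequality you are left with a term of order $\eps^{-3}\|D(v_h-w_h)\|_\lt^2$, which translates into a coefficient $\mathcal{C}_1(\eps)=O(\eps^{-1})$ in front of $\frac{1}{\sqrt{\eps}}\|v_h-w_h\|_{H^1}$. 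That coefficient contains no factor of $h$ or $\rho_0$ at all, so it cannot be ``enforced $\le\frac14$ by choosing $h_0$ and $\rho_0$ small''; the same defect appears in your treatment of the determinant term, where bounding $\|\Lambda_{\mu,\chi}\|_{L^\infty}$ by its leading $O(\eps^{-(n-1)})$ piece again yields a large, $h$-independent constant multiplying $\|\mu_h-\chi_h\|_\lt$ (merely bounding $\Lambda_{\mu,\chi}$ is not enough--it must be shown to be \emph{close to} $\Phi^\eps$).

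The missing idea is a structural cancellation rather than direct estimation. The paper uses three ingredients absent from your plan: (i) both arguments lie in $B_h(\rho_0)\subset\tilde Z_h$, so their difference satisfies $(\mu_h-\chi_h,\kappa_h)+(\Div(\kappa_h),D(v_h-w_h))=0$ for all $\kappa_h\in W^h_0$; (ii) the row divergence-free property of $\Phi^\eps$ (Lemma \ref{lem3.1}), which gives $(\Phi^\eps D(v_h-w_h),DT^{(2)})=(\Div(\Phi^\eps T^{(2)}),D(v_h-w_h))$; and (iii) the operator $\Pi_h$ with \eqref{operator1}--\eqref{operator2}. Splitting $\Lambda_h=\Phi^\eps+(\Lambda_h-\Phi^\eps)$ and combining the $\Phi^\eps$-part of the determinant term with the gradient term via (i)--(iii), the two dangerous contributions telescope into $\frac1\eps\bigl(\mu_h-\chi_h,\Phi^\eps T^{(2)}-\Pi_h(\Phi^\eps T^{(2)})\bigr)$, which carries an extra factor of $h$ (Step 4 of the paper shows $\|\Phi^\eps T^{(2)}-\Pi_h(\Phi^\eps T^{(2)})\|_\lt\le Ch\eps^{-\frac{13}{12}}\|DT^{(2)}\|_\lt$), while the remainder $((\Lambda_h-\Phi^\eps):(\mu_h-\chi_h),T^{(2)})$ is small because $\Lambda_h-\Phi^\eps$ is $O(h^l+\rho_0)$ in $L^2$, paired with the discrete embedding $\|T^{(2)}\|_{L^\infty}\le C|\log h|^{3-n}h^{1-\frac{n}2}\|T^{(2)}\|_{H^1}$ (the $L^\infty$ norm falls on $T^{(2)}$, not on the cofactor factor as you suggest). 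This is why the paper's final bound has only $\|\mu_h-\chi_h\|_\lt$ on the right, multiplied by $h+|\log h|^{3-n}h^{1-\frac{n}2}\rho_0$, and why $B_h(\rho)$ was defined with the intersection with $\tilde Z_h$ in the first place.
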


\begin{proof}
We divide the proof into five steps.

{\em Step 1:} To ease notation, let
\[
T^{(1)}=T^{(1)}(\mu_h,v_h)-T^{(1)}(\chi_h,w_h),\quad
T^{(2)}=T^{(2)}(\mu_h,v_h)-T^{(2)}(\chi_h,w_h).
\]
By the definition of $T^{(i)}$ we get
\begin{align}\label{def1}
&\bigl(T^{(1)},\kappa_h\bigl)+\bigl(\Div(\kappa_h),D(T^{(2)})\bigr)
=0\quad \forall\kappa_h\in W^h_0,\\
&\bigl(\Div(T^{(1)}),Dz_h\bigr)
-\frac{1}{\eps}\bigl(\Phi^\eps D(T^{(2)}),Dz_h\bigr) \label{def2} \\
&\hskip 0.3in =\frac{1}{\eps}\bigl[ \bigl(\Phi^\eps D(w_h-v_h),Dz_h\bigr)
+\bigl(\det(\chi_h)-\det(\mu_h),z_h\bigr)\bigr]\quad \forall z_h\in
V^h_0. \nonumber
\end{align}
Letting $z_h=T^{(2)}$ and $\kappa_h=T^{(1)}$, subtracting
\eqref{def2} from \eqref{def1}, and using the Mean Value
Theorem we have
\begin{align*}
&(T^{(1)},T^{(1)})+\frac{1}{\eps}(\Phi^\eps DT^{(2)},DT^{(2)})\\
&\hspace{0.2in} =\frac{1}{\eps}\bigl[(\Phi^\eps
D(v_h-w_h),DT^{(2)})+(\text{det}(\mu_h)-\text{det}(\chi_h),T^{(2)})\bigr]\\
&\hspace{0.2in} =\frac{1}{\eps}\bigl[(\Phi^\eps
D(v_h-w_h),DT^{(2)})+(\Lambda_h:(\mu_h-\chi_h),T^{(2)})\bigr]\\
&\hspace{0.2in} =\frac{1}{\eps}\bigl[(\Phi^\eps D(v_h-w_h),DT^{(2)})+(\Phi^\eps:
(\mu_h-\chi_h),T^{(2)})\\
&\hspace{0.4in}+((\Lambda_h-\Phi^\eps):(\mu_h-\chi_h),T^{(2)})\bigr]\\
&\hspace{0.2in} =\frac{1}{\eps}\bigl[(\Div (\Phi^\eps
T^{(2)}),D(v_h-w_h))+(\mu_h-\chi_h,\Phi^\eps T^{(2)})\\
&\hspace{0.4in}+((\Lambda_h-\Phi^\eps):(\mu_h-\chi_h),T^{(2)})\bigr]\\
&\hspace{0.2in} =\frac{1}{\eps}\bigl[(\Div (\Pi_h(\Phi^\eps T^{(2)})),D(v_h-w_h))
+(\mu_h-\chi_h,\Phi^\eps T^{(2)})\\
&\hspace{0.4in}+((\Lambda_h-\Phi^\eps):(\mu_h-\chi_h),T^{(2)})\bigr]\\
&\hspace{0.2in} =\frac{1}{\eps}\bigl[(\Phi^\eps T^{(2)}-\Pi_h(\Phi^\eps
T^{(2)}),\mu_h-\chi_h)+((\Lambda_h-\Phi^\eps):(\mu_h-\chi_h),T^{(2)})\bigr]
\end{align*}
\begin{align*}
&\hspace{0.2in}
\le \frac{1}{\eps}\bigl[\|\Phi^\eps T^{(2)}-\Pi_h(\Phi^\eps
T^{(2)})\|_\lt\|\mu_h-\chi_h\|_\lt \\
&\hspace{0.7in}
+C\|\Lambda_h-\Phi^\eps\|_{L^2}\|\mu_h-\chi_h\|_{L^2}\|T^{(2)}\|_{L^\infty}\bigr]
\\
&\hspace{0.2in}
\le \frac{1}{\eps}\bigl[\|\Phi^\eps T^{(2)}-\Pi_h(\Phi^\eps
T^{(2)})\|_\lt\|\mu_h-\chi_h\|_\lt \\
&\hspace{0.7in}
+|\log h|^{3-n}h^{1-\frac{n}2}\|\Lambda_h-\Phi^\eps\|_{L^2}\|\mu_h-\chi_h\|_\lt\|T^{(2)}\|_{H^1}\bigr],
\end{align*}
where $\Lambda_h=\text{cof}(\mu_h+\tau(\chi_h-\mu_h)),\ \tau\in [0,1].$
$n=2,3$. We have used the inverse inequality to get the last
inequality above.

{\em Step 2: The case of $n=2$.} We bound $\|\Phi^\eps-\Lambda_h\|_{L^2}$
as follows:
\begin{align*}
\|\Phi^\eps-\Lambda_h\|_{L^2}&=\|\text{cof}(\sigma^\eps)-\text{cof}(\mu_h+\tau
(\chi_h-\mu_h))\|_{L^2}\\
&=\|\sigma^\eps-\mu_h-\tau( \chi_h-\mu_h)\|_\lt\\
&\le \|\sigma^\eps-I_h\sigma^\eps\|_{L^2}+\|I_h\sigma^\eps-\mu_h\|_{L^2}
+\|\chi_h-\mu_h\|_{L^2}\\
&\le Ch^l\|\sigma^\eps\|_{H^l}+3\rho_0.
\end{align*}

{\em Step 3: The case of $n=3$.} To bound $\|\Phi^\eps-\Lambda_h\|_{L^2}$
in this case, we first write
\begin{align*}
\|(\Phi^\eps-\Lambda_h)_{ij}\|_{L^2}&=\|(\text{cof}(\sigma^\eps)_{ij})
-\text{cof}(\mu_h+\tau(\chi_h-\mu_h))_{ij}\|_{L^2}\\
&=\|\text{det}(\sigma^\eps|_{ij})-\text{det}(\mu_h|_{ij}
+\tau(\chi_h|_{ij}-\mu_h|_{ij}))\|_{L^2},
\end{align*}
where $\sigma|_{ij}$ denotes the $2\times 2$ matrix after deleting
the $i^{th}$ row and $j^{th}$ column. Then, use the Mean Value theorem to get
\begin{align*}
\|(\Phi^\eps-\Lambda_h)_{ij}\|_{L^2}
&=\|\text{det}(\sigma^\eps|_{ij})-\text{det}(\mu_h|_{ij}
+\tau(\chi_h|_{ij}-\mu_h|_{ij}))\|_{L^2}\\
&=\|\Lambda_{ij}:(\sigma^\eps|_{ij}-\mu_h|_{ij}
-\tau(\chi_h|_{ij}-\mu_h|_{ij}))\|_{L^2}\\
&\le \|\Lambda_{ij}\|_{L^\infty}\|\sigma^\eps|_{ij}-\mu_h|_{ij}
-\tau(\chi_h|_{ij}-\mu_h|_{ij})\|_{L^2},
\end{align*}
where
$\Lambda_{ij}=\text{cof}(\sigma^\eps|_{ij}+\lambda(\mu|_{ij}
-\tau(\chi_h|_{ij}-\mu|_{ij})-\sigma^\eps|_{ij})),\ \lambda\in [0,1]$.

On noting that $\Lambda_{ij}\in \mathbf{R}^2$, we have
\begin{align*}
\|\Lambda_{ij}\|_{L^\infty}&=\|\text{cof}(\sigma^\eps|_{ij}
+\lambda(\mu|_{ij}-\tau(\chi_h|_{ij}-\mu|_{ij})-\sigma^\eps|_{ij}))\|_{L^\infty}\\
&=\|\sigma^\eps|_{ij}+\lambda(\mu|_{ij}-\tau(\chi_h|_{ij}-\mu|_{ij})
-\sigma^\eps|_{ij})\|_{L^\infty}\\
&\le C\|\sigma^\eps\|_{L^\infty}\le \frac{C}{\eps}.
\end{align*}
Combining the above estimates gives
\begin{align*}
\|(\Phi^\eps-\Lambda_h)_{ij}\|_{L^2}
&\le \frac{C}{\eps}\|\sigma^\eps|_{ij}-\mu_h|_{ij}
-\tau(\chi_h|_{ij}-\mu_h|_{ij})\|_{L^2}\\
&\le
\frac{C}{\eps}\left(h^l\|\sigma^\eps\|_{H^l}+\rho_0\right).\end{align*}

{\em Step 4:} We now bound $\|\Phi^\eps T^{(2)}-\Pi_h(\Phi^\eps T^{(2)})\|_\lt$ as follows:
\begin{align*}
&\|\Phi^\eps T^{(2)}-\Pi_h(\Phi^\eps T^{(2)})\|_\lt^2
\le Ch^2\|\Phi^\eps T^{(2)}\|_{H^1}^2\\
&\hspace{0.2in} =Ch^2\bigl(\|\Phi^\eps T^{(2)}\|_\lt^2+\|D(\Phi^\eps
T^{(2)})\|_\lt^2\bigr)\\
&\hspace{0.2in} \le Ch^2\bigl(\|\Phi^\eps T^{(2)}\|_\lt^2+\|\Phi^\eps D
T^{(2)}\|_\lt^2+\|D\Phi^\eps T^{(2)}\|_\lt^2\bigr)\\
&\hspace{0.2in} \le Ch^2\bigl(\|\Phi^\eps\|_{L^4}^2
\|T^{(2)}\|_{L^4}^2+\|\Phi^\eps\|_{L^\infty}
\|DT^{(2)}\|_\lt^2+\|D\Phi^\eps\|_{L^3}^2
\|T^{(2)}\|_{L^6}^2\bigr)\\
&\hspace{0.2in} \le Ch^2\bigl(\|\Phi^\eps\|_{L^4}^2
\|T^{(2)}\|_{H^1}^2+\|\Phi^\eps\|_{L^\infty}^2
\|DT^{(2)}\|_\lt^2+\|D\Phi^\eps\|_{L^3}^2
\|T^{(2)}\|_{H^1}^2\bigr)\\
&\hspace{0.2in} \le Ch^2\bigl(\|\Phi^\eps\|_{L^\infty}^2
+\|D\Phi^\eps\|_{L^3}^2\bigr)\|DT^{(2)}\|_\lt^2\\
&\hspace{0.2in} \le \frac{Ch^2}{\eps^{\frac{13}6}}\|DT^{(2)}\|_\lt^2,
\end{align*}
where we have used Sobolev's inequality followed by Poincare's inequality.
Thus,
\begin{align*}
\|\Phi^\eps T^{(2)}-\Pi_h(\Phi^\eps T^{(2)})\|_\lt
\le \frac{Ch}{\eps^{\frac{13}{12}}}\|DT^{(2)}\|_\lt.
\end{align*}

{\em Step 5: Finishing up.} Substituting all estimates from Steps 2-4 into
Step 1, and using the fact that $\Phi^\eps$ is positive definite we obtain
for $n=2,3$
\begin{align*}
\|T^{(1)}\|_\lt^2+\frac{\theta}{\eps}\|DT^{(2)}\|_\lt^2
&\le C\eps^{-{\frac{25}{12}}}\bigl(h+|\log
h|^{3-n}h^{1-\frac{n}2}\rho_0\bigr)\|\mu_h-\chi_h\|_\lt\|DT^{(2)}\|_\lt.
\end{align*}
Using Schwarz's inequality we get
\begin{align*}
\|T^{(1)}\|_\lt+ \frac{1}{\sqrt{\vepsi}} \|T^{(2)}\|_{H^1} &\le
C\eps^{-\frac{19}{12}} \bigl(h+|\log
h|^{3-n}h^{1-\frac{n}2}\rho_0\bigr)\|\mu_h-\chi_h\|_\lt.
\end{align*}

Choosing $h_0=o(\eps^\frac{19}{12})$ and
$\rho_0=o(\eps^\frac{19}{12} |\log h|^{n-3}h^{\frac{n}2-1})$, then
for $h\le h_0$ there holds
\begin{align*}
\|T^{(1)}\|_\lt + \frac{1}{\sqrt{\vepsi}} \|T^{(2)}\|_{H^1}
&\le \frac{1}{2}\|\mu_h-\chi_h\|_\lt\\
&\le \frac{1}{2}\bigl(\|\mu_h-\chi_h\|_\lt
+ \frac{1}{\sqrt{\eps}} \|v_h-w_h\|_{H^1}\bigr).
\end{align*}
The proof is complete.
\end{proof}

We are now ready to state and prove the main theorem of this paper.

\begin{thm}\label{thm1}
Let
$\rho_1=2[C_2(\eps)h^{l-2}+\frac{C_3(\eps)}{\sqrt{\eps}}h^{l-1}](\|\sigma^\eps\|_{H^l}
+\|u^\eps\|_{H^l})$.  Then there exists an $h_1>0$ such that for
$h\le \text{min}\{h_0,h_1\}$, there exists a unique solution
$(\sigma^\eps_h,u^\eps_h)$ to \eqref{prob3}-\eqref{prob4} in the
ball $B_h(\rho_1)$. Moreover,
\begin{align}\label{sigmal2err}
\|\sigma^\eps-\sigma^\eps_h\|_\lt
+\frac{1}{\sqrt{\eps}} \|u^\eps-u^\eps_h\|_{H^1} &\le
 C_4(\eps) h^{l-2}\bigl(\|\sigma^\eps\|_{H^l} +\|u^\eps\|_{H^l}\bigr), \\
\label{sigmah1err}\|\sigma^\eps-\sigma^\eps_h\|_{H^1}&\le
C_5(\eps)h^{l-3}\bigl(\|\sigma^\eps\|_{H^l}
+\|u^\eps\|_{H^l}\bigr).
\end{align}
\end{thm}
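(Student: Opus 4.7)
The plan is to apply the Banach fixed point theorem to the mapping $T$ on the closed ball $B_h(\rho_1)$, relying crucially on Lemmas \ref{lembound5} and \ref{lembound6}. The radius $\rho_1$ is tailored so that it equals exactly twice the bound that Lemma \ref{lembound5} supplies for the distance from the center $(I_h\sigma^\eps,I_hu^\eps)$ to its image $T(I_h\sigma^\eps,I_hu^\eps)$ in the product norm $\|\cdot\|_\lt+\eps^{-1/2}\|\cdot\|_{H^1}$. Combined with the $\tfrac12$-contraction from Lemma \ref{lembound6}, this will force $T$ to preserve $B_h(\rho_1)$, and any resulting fixed point is, by definition of $T$, a solution of \eqref{prob3}--\eqref{prob4}.

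First I would verify that $T$ actually maps $W^h_\eps\times V^h_g$ into $\tilde Z_h$: rearranging the defining relation \eqref{map1} immediately yields $(T^{(1)}(\mu_h,v_h),\kappa_h)+(\Div(\kappa_h),DT^{(2)}(\mu_h,v_h))=\langle\tilde g,\kappa_h\rangle$ for every $\kappa_h\in W^h_0$, which is precisely the constraint defining $\tilde Z_h$. Next I would choose $h_1>0$ so that $\rho_1\le\rho_0$ whenever $h\le h_1$; since $\rho_0=o(\eps^{19/12}|\log h|^{n-3}h^{n/2-1})$ and $\rho_1=O(h^{l-2})$ with $l=\min\{k+1,r\}$, this is achievable provided $k\ge 2$ and $r$ is large enough so that $l\ge 3$. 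Then for $h\le\min\{h_0,h_1\}$ and any $(\mu_h,v_h)\in B_h(\rho_1)\subset B_h(\rho_0)$, applying Lemma \ref{lembound6} with $(\chi_h,w_h)=(I_h\sigma^\eps,I_hu^\eps)$ together with Lemma \ref{lembound5} and the triangle inequality gives
\begin{align*}
\|T^{(1)}(\mu_h,v_h)&-I_h\sigma^\eps\|_\lt+\tfrac{1}{\sqrt{\eps}}\|T^{(2)}(\mu_h,v_h)-I_hu^\eps\|_{H^1} \\
&\le \tfrac12\bigl(\|\mu_h-I_h\sigma^\eps\|_\lt+\tfrac{1}{\sqrt{\eps}}\|v_h-I_hu^\eps\|_{H^1}\bigr)+\tfrac12\rho_1 \le \rho_1,
\end{align*}
so $T(B_h(\rho_1))\subset B_h(\rho_1)$. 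Banach's theorem then produces a unique fixed point $(\sigma^\eps_h,u^\eps_h)\in B_h(\rho_1)$, which is the unique discrete solution in that ball.

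To extract the error estimates I would split $\sigma^\eps-\sigma^\eps_h=(\sigma^\eps-I_h\sigma^\eps)+(I_h\sigma^\eps-\sigma^\eps_h)$ and analogously for $u^\eps-u^\eps_h$. The interpolation pieces are bounded by $Ch^l\|\sigma^\eps\|_{H^l}$ and $Ch^{l-1}\|u^\eps\|_{H^l}$ in $L^2$ and $H^1$ respectively, while the membership $(\sigma^\eps_h-I_h\sigma^\eps,u^\eps_h-I_hu^\eps)\in B_h(\rho_1)$ contributes at most $\rho_1$; this delivers \eqref{sigmal2err} with $C_4(\eps)$ assembled from the interpolation constants together with $C_2(\eps)$ and $C_3(\eps)/\sqrt{\eps}$. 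For \eqref{sigmah1err} I would invoke the inverse inequality $\|I_h\sigma^\eps-\sigma^\eps_h\|_{H^1}\le Ch^{-1}\|I_h\sigma^\eps-\sigma^\eps_h\|_\lt\le Ch^{-1}\rho_1$ and combine with the $H^1$-interpolation estimate, picking up the extra $h^{-1}$ factor that produces the exponent $l-3$.

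The main obstacle is the bookkeeping of the $\eps$-dependent constants: one must track how the radii $\rho_0,\rho_1$, the invariance condition $T(B_h(\rho_1))\subset B_h(\rho_1)$, and the compatibility $\rho_1\le\rho_0$ interact, since it is this compatibility that simultaneously fixes $h_1$ and exposes the quantitative trade-off between $h$ and $\eps$ that is already implicit in the exponent $\tfrac{19}{12}$ appearing in Lemma \ref{lembound6}.
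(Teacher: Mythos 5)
Your proposal is correct and follows essentially the same route as the paper: choose $h_1$ so that $\rho_1\le\rho_0$, combine Lemma \ref{lembound5} (distance from the center to its image is at most $\rho_1/2$) with the $\tfrac12$-contraction of Lemma \ref{lembound6} to show $T$ maps $B_h(\rho_1)$ into itself, invoke the Banach fixed point theorem, and then obtain \eqref{sigmal2err}--\eqref{sigmah1err} by the triangle inequality with the interpolation estimates and the inverse inequality. Your explicit check that $T$ lands in $\tilde Z_h$ is a small detail the paper leaves implicit, but it does not change the argument.
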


\begin{proof}
Let $(\mu_h,v_h)\in B_h(\rho_1)$ and choose $h_1>0$ such that 
\begin{align*}
h_1|\log h_1|^{\frac{2(3-n)}{2l-n}}&\le
C\left(\frac{\eps^{\frac{25}{12}}}{C_3(\eps)(\|\sigma^\eps\|_{H^l}+\|u^\eps\|_{H^l})}\right)^{\frac{2}{2l-n}} \qquad \text{and}\\
h_1|\log h_1|^{\frac{2(3-n)}{2l-n-2}} &\le
C\left(\frac{\eps^{\frac{19}{12}}}{C_2(\eps)(\|\sigma^\eps\|_{H^l}+\|u^\eps\|_{H^l})}\right)^{\frac{2}{2l-n-2}}.
\end{align*}
Then $h\le \text{min}\{h_0,h_1\}$ implies $\rho_1\le \rho_0$. Thus,
using the triangle inequality and Lemmas \ref{lembound5} and
\ref{lembound6} we get
\begin{align*}
&\|I_h\sigma^\eps-T^{(1)}(\mu_h,v_h)\|_\lt
+\frac{1}{\sqrt{\eps}}\|I_hu^\eps-T^{(2)}(\mu_h,v_h)\|_{H^1}
\le \|I_h\sigma^\eps-T^{(1)}(I_h\sigma^\eps,I_hu^\eps)\|_\lt\\
&\hspace{0.4in}
+\|T^{(1)}(I_h\sigma^\eps,I_hu^\eps)-T^{(1)}(\mu_h,v_h)\|_\lt
+\frac{1}{\sqrt{\eps}}\|I_h u^\eps-T^{(2)}(I_h\sigma^\eps,I_hu^\eps)\|_{H^1}\\
&\hspace{0.4in}
+\frac{1}{\sqrt{\eps}}\|T^{(2)}(I_h\sigma^\eps,I_hu^\eps)-T^{(2)}(\mu_h,v_h)\|_{H^1}\\
&\quad \le
\bigl[C_2(\eps)h^{l-2}+\frac{C_3(\eps)}{\sqrt{\eps}}h^{l-1} \bigr]
\bigl(\|\sigma^\eps\|_{H^l}
+\|u^\eps\|_{H^l}\bigr) \\
&\hspace{0.4in} +\frac{1}{2}\bigl(\|I_h\sigma^\eps-\mu_h\|_\lt
+\frac{1}{\sqrt{\vepsi}}\|I_hu^\eps-v_h\|_{H^1}\bigr)\\
&\quad \le \frac{\rho_1}{2}+\frac{\rho_1}{2}=\rho_1<1.
\end{align*}
So $T(\mu_h,v_h)\in B_h(\rho_1)$. Clearly, $T$
is a continuous mapping. Thus, $T$ has a unique fixed point
$(\sigma^\eps_h,u^\eps_h)\in B_h(\rho_1)$ which is the unique
solution to \eqref{prob3}-\eqref{prob4}.

Next, we use the triangle inequality to get
\begin{align*}
\|\sigma^\eps-\sigma^\eps_h\|_\lt+\frac{1}{\sqrt{\eps}}\|u^\eps-u^\eps_h\|_{H^1}&\le \|\sigma^\eps-I_h\sigma^\eps\|_\lt+\|I_h\sigma^\eps-\sigma^\eps_h\|_\lt\\
&\hspace{0.4in}+\frac{1}{\sqrt{\eps}}\bigl(\|u^\eps-I_h
u^\eps\|_{H^1}+\|I_h
u^\eps-u^\eps_h\|_{H^1}\bigr)\\
&\le
\rho_1+Ch^{l-1}\bigl(\|\sigma^\eps\|_{H^l}+\|u^\eps\|_{H^l}\bigr)\\
&\le
C_4(\eps)h^{l-2}\bigl(\|\sigma^\eps\|_{H^l}+\|u^\eps\|_{H^l}\bigr).
\end{align*}

Finally, using the inverse inequality we have
\begin{align*}
\|\sigma^\eps-\sigma^\eps_h\|_{H^1}
&\le \|\sigma^\eps-I_h\sigma^\eps\|_{H^1}
+\|I_h\sigma^\eps-\sigma^\eps_h\|_{H^1}\\
&\le \|\sigma^\eps-I_h\sigma^\eps\|_{H^1}
+Ch^{-1}\|I_h\sigma^\eps-\sigma^\eps_h\|_\lt\\
&\le Ch^{l-1}\|\sigma^\eps\|_{H^l}+Ch^{-1}\rho_1\\
&\le C_5(\eps)h^{l-3}\bigl[\|\sigma^\eps\|_{H^l}
+\|u^\eps\|_{H^l}\bigr].
\end{align*}
The proof is complete.
\end{proof}

\begin{remark}
By the definition of $\rho_1$, and the remark following Lemma
\ref{lembound5}, we see that
$C_4(\eps)=C_5(\eps)=O(\eps^{-\frac52})$ when $n=2$,
$C_4(\eps)=C_5(\eps)=O(\eps^{-4})$ when $n=3$.
\end{remark}

Comparing with error estimates for the linearized problem in
Theorem \ref{thm0}, we see that the above $H^1$-error for the
scalar variable is not optimal. Next, we shall employ a similar
duality argument as used in the proof of Theorem \ref{thm0}
to show that the estimate can be improved to optimal order.

\begin{thm}\label{uh1errthm}
Under the same hypothesis of Theorem \ref{thm1} there holds
\begin{equation} \label{uh1err}
\|u^\eps-u^\eps_h\|_{H^1} \le C_4(\eps) \eps^{-2} \bigl[
h^{l-1}+C_5(\eps)h^{2(l-2)}\bigr]
\bigl(\|\sigma^\eps\|_{H^l}+\|u^\eps\|_{H^l}\bigr).
\end{equation}
\end{thm}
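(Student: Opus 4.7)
The plan is to improve the $H^1$-estimate for $u^\eps-u^\eps_h$ by a duality argument that parallels the proof of the $H^1$-bound \eqref{above9} in Theorem \ref{thm0}, adapted to accommodate the nonlinearity in the determinant. I would first introduce the dual problem: given $u^\eps-u^\eps_h\in V_0$, find $(\kappa,z)\in W_0\times V_0$ satisfying
\begin{align*}
(\kappa,\mu)+(\Div(\mu),Dz)&=0 \qquad \forall \mu\in W_0,\\
(\Div(\kappa),Dv)-\tfrac{1}{\eps}(\Phi^\eps Dz,Dv)&=\tfrac{1}{\eps}(D(u^\eps-u^\eps_h),Dv) \qquad \forall v\in V_0.
\end{align*}
As in the derivation of \eqref{estimatelin}, the linear elliptic theory gives $z\in H^3(\Omega)$ with $\|z\|_{H^3}\le C_b(\eps)\|D(u^\eps-u^\eps_h)\|_\lt$ and $C_b(\eps)=O(\eps^{-1})$.

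Second, I would record the nonlinear error equations for $(\sigma^\eps-\sigma^\eps_h,u^\eps-u^\eps_h)$. Subtracting \eqref{prob3}-\eqref{prob4} from \eqref{prob1}-\eqref{prob2} and applying the Mean Value Theorem to $\det(\sigma^\eps)-\det(\sigma^\eps_h)$ yields, for every $(\mu_h,v_h)\in W^h_0\times V^h_0$,
\begin{align*}
(\sigma^\eps-\sigma^\eps_h,\mu_h)+(\Div(\mu_h),D(u^\eps-u^\eps_h))&=0,\\
(\Div(\sigma^\eps-\sigma^\eps_h),Dv_h)+\tfrac{1}{\eps}\bigl(\Lambda^\eps_h:(\sigma^\eps-\sigma^\eps_h),v_h\bigr)&=0,
\end{align*}
where $\Lambda^\eps_h=\text{cof}(\sigma^\eps+\tau(\sigma^\eps_h-\sigma^\eps))$ for some $\tau\in[0,1]$. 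Writing $\Lambda^\eps_h=\Phi^\eps+(\Lambda^\eps_h-\Phi^\eps)$ isolates the linearization at $u^\eps$ from a nonlinear remainder, which is the source of the additional $h^{2(l-2)}$ correction in \eqref{uh1err}.

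Third, testing the second dual equation with $v=u^\eps-u^\eps_h$ gives
\[
\tfrac{1}{\eps}\|D(u^\eps-u^\eps_h)\|_\lt^2=(\Div(\kappa),D(u^\eps-u^\eps_h))-\tfrac{1}{\eps}(\Phi^\eps Dz,D(u^\eps-u^\eps_h)),
\]
and from here I would follow the telescoping sequence used in the proof of \eqref{above9}: split $\kappa=\Pi_h\kappa+(\kappa-\Pi_h\kappa)$; use the first error equation to rewrite $(\Div(\Pi_h\kappa),D(u^\eps-u^\eps_h))$ as $-(\sigma^\eps-\sigma^\eps_h,\Pi_h\kappa)$; add and subtract $\kappa$; apply the first dual equation; split $Dz=DI_hz+D(z-I_hz)$; and finally insert $v_h=I_hz$ into the second nonlinear error equation. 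Since $I_hz|_{\p\Omega}=0$, the matrix-valued function $\Phi^\eps I_hz$ lies in $W_0$, so a second application of the first error equation with $\mu_h=\Pi_h(\Phi^\eps I_hz)\in W^h_0$, combined with property \eqref{operator1} and the row divergence-free identity of Lemma \ref{lem3.1}, converts $(\Phi^\eps:(\sigma^\eps-\sigma^\eps_h),I_hz)$ into $-(\Phi^\eps DI_hz,D(u^\eps-u^\eps_h))$ modulo interpolation residues. The $\Phi^\eps$ terms then cancel up to a small remainder $\tfrac{1}{\eps}(\Phi^\eps D(I_hz-z),D(u^\eps-u^\eps_h))$, exactly mirroring the linear case.

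The only genuinely new contribution is the nonlinear defect
\[
\tfrac{1}{\eps}\bigl((\Lambda^\eps_h-\Phi^\eps):(\sigma^\eps-\sigma^\eps_h),I_hz\bigr),
\]
which I would bound using the cofactor-difference estimates established in Steps 2-3 of the proof of Lemma \ref{lembound6} (distinguishing $n=2$ and $n=3$) together with a Sobolev embedding bound for $I_hz$ and the regularity estimate for $z$. This produces a bound of the form $C(\eps)\|\sigma^\eps-\sigma^\eps_h\|_\lt^2\|D(u^\eps-u^\eps_h)\|_\lt$, which by \eqref{sigmal2err} generates the quadratic $C_5(\eps)h^{2(l-2)}$ correction in \eqref{uh1err}. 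All remaining terms are interpolation residues that, together with \eqref{sigmal2err}-\eqref{sigmah1err} and the standard approximation properties of $\Pi_h\kappa$ and $I_hz$, produce the leading $h^{l-1}$ contribution. Dividing by $\|D(u^\eps-u^\eps_h)\|_\lt$ and substituting $C_b(\eps)=O(\eps^{-1})$ delivers \eqref{uh1err}. The main obstacle is the careful tracking of $\eps$-powers through the repeated cofactor manipulations, and in ensuring that the $\Phi^\eps$ cancellation still operates despite the fact that $\sigma^\eps-\sigma^\eps_h$ is not the Hessian of any scalar function; this is what forces the additional layer of first-error-equation plus $\Pi_h$-approximation applied to the non-discrete matrix $\Phi^\eps I_hz$.
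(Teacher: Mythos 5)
Your proposal is correct and follows essentially the same route as the paper: the same dual problem with $\|z\|_{H^3}\le C_b(\eps)\|D(u^\eps-u^\eps_h)\|_{L^2}$, the same telescoping through $\Pi_h\kappa$ and $I_hz$, the same use of Lemma \ref{lem3.1} and \eqref{operator1} applied to $\Pi_h(\Phi^\eps I_hz)\in W^h_0$ to cancel the $\Phi^\eps$ terms, and the same treatment of the nonlinear defect $(\Psi^\eps-\Phi^\eps):(\sigma^\eps-\sigma^\eps_h)$ via the $n=2$/$n=3$ cofactor-difference bounds and \eqref{sigmal2err}, which is exactly where the $h^{2(l-2)}$ correction arises in the paper as well.
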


\begin{proof}
The regularity assumption implies that there exists $(\kappa,z)
\in W_0\times V_0\cap H^3(\Ome)$ such that
\begin{alignat}{2}
\label{adjointh1thm1}(\kappa,\mu)+(\Div(\mu),Dz)&=0\spa &&\forall
\mu\in W_0,\\
\label{adjointh1thm2}(\Div(\kappa),Dv)-\frac{1}{\eps}(\Phi^\eps
Dz,Dv)&=\frac{1}{\eps}(D(u^\eps-u^\eps_h),Dv)\spa &&\forall v\in
V_0,\end{alignat} with
\begin{equation}\label{zboundh1thm}\|z\|_{H^3}\le
C_b(\eps)\|D(u^\eps-u^\eps_h)\|_\lt.
\end{equation}
It is easy to check that $\sigma^\eps-\sigma^\eps_h$ and
$u^\eps-u^\eps_h$ satisfy the following error equations:
\begin{alignat}{2}
\label{erroreqn1}(\sigma^\eps-\sigma^\eps_h,\mu_h)
+(\Div(\mu_h),D(u^\eps-u^\eps_h))&=0\spa &&\forall \mu_h\in W^h_0,\\
\label{erroreqn2}(\Div(\sigma^\eps-\sigma^\eps_h),Dv_h)
+\frac{1}{\eps}(\text{det}(\sigma^\eps)-\text{det}(\sigma^\eps_h),v_h)&=0\spa
&&\forall v_h\in V^h_0.
\end{alignat}
By \eqref{adjointh1thm1}-\eqref{erroreqn2} and the Mean Value Theorem we get
\begin{align*}
&\frac{1}{\eps}\|D(u^\eps-u^\eps_h)\|_\lt^2
=\bigl(\Div(\kappa),D(u^\eps-u^\eps_h)\bigr)
-\frac{1}{\eps}\bigl(\Phi^\eps Dz,D(u^\eps-u^\eps_h)\bigr)\\
&\quad =\bigl(\Div(\Pi_h\kappa),D(u^\eps-u^\eps_h)\bigr)
-\frac{1}{\eps}\bigl(\Phi^\eps D(u^\eps-u^\eps_h),Dz\bigr)
+\bigl(\Div(\kappa-\Pi_h\kappa),D(u^\eps-u^\eps_h)\bigr)\\
&\quad =\bigl(\sigma^\eps_h-\sigma^\eps,\Pi_h\kappa\bigr)
-\frac{1}{\eps}\bigl(\Phi^\eps D(u^\eps-u^\eps_h),Dz\bigr)
+\bigl(\Div(\kappa-\Pi_h\kappa),D(u^\eps-u^\eps_h)\bigr)\\
&\quad =\bigl(\sigma^\eps_h-\sigma^\eps,\kappa\bigr)
-\frac{1}{\eps}\bigl(\Phi^\eps D(u^\eps-u^\eps_h),Dz\bigr)\\
&\hspace{0.7in}+\bigl(\Div(\kappa-\Pi_h\kappa),D(u^\eps-I_hu^\eps)\bigr)
+\bigl(\sigma^\eps_h-\sigma^\eps,\Pi_h\kappa-\kappa\bigr)\\
&\quad =\bigl(\Div(\sigma^\eps-\sigma^\eps_h),Dz\bigr)
-\frac{1}{\eps}\bigl(\Phi^\eps D(u^\eps-u^\eps_h),Dz\bigr)\\
&\hspace{0.7in}+\bigl(\Div(\kappa-\Pi_h\kappa),D(u^\eps-I_hu^\eps)\bigr)
+\bigl(\sigma^\eps_h-\sigma^\eps,\Pi_h\kappa-\kappa\bigr)\\
&\quad =\bigl(\Div(\sigma^\eps-\sigma^\eps_h),D(z-I_hz)\bigr)
-\frac{1}{\eps}\bigl(\Phi^\eps D(u^\eps-u^\eps_h),D(z-I_hz)\bigr)\\
&\hspace{0.7in}+\bigl(\Div(\kappa-\Pi_h\kappa),D(u^\eps-I_hu^\eps)\bigr)
+\bigl(\sigma^\eps_h-\sigma^\eps,\Pi_h\kappa-\kappa\bigr)
\\
&\hspace{0.7in}-\frac{1}{\eps}\bigl(\text{det}(\sigma^\eps)
-\text{det}(\sigma^\eps_h),I_hz\bigr)-\frac{1}{\eps}\bigl(\Phi^\eps D(u^\eps-u^\eps_h),D(I_hz)\bigr)\\
&\quad =\bigl(\Div(\sigma^\eps-\sigma^\eps_h),D(z-I_hz)\bigr)
-\frac{1}{\eps}\bigl(\Phi^\eps D(u^\eps-u^\eps_h),D(z-I_hz)\bigr)\\
&\hspace{0.7in}+\bigl(\Div(\kappa-\Pi_h\kappa),D(u^\eps-I_hu^\eps)\bigr)
+\bigl(\sigma^\eps_h-\sigma^\eps,\Pi_h\kappa-\kappa\bigr)\\
&\hspace{0.7in}-\frac{1}{\eps}\bigl(\Psi^\eps:(\sigma^\eps-\sigma^\eps_h),I_hz\bigr)
-\frac{1}{\eps}\bigl(\Phi^\eps D(u^\eps-u^\eps_h),D(I_hz)\bigr),
\end{align*}
where $\Psi^\eps=\text{cof}(\sigma^\eps+\tau[\sigma^\eps_h-\sigma^\eps])$
for $\tau\in [0,1]$.

Next, we note that
\begin{align*}
&\bigl(\Psi^\eps:(\sigma^\eps-\sigma^\eps_h),I_hz\bigr)+\bigl(\Phi^\eps
D(u^\eps-u^\eps_h),D(I_hz)\bigr)\\
&\quad =\bigl(\Phi^\eps:(\sigma^\eps-\sigma^\eps_h),I_hz\bigr)
+\bigl(\Div(\Phi^\eps I_h z),D(u^\eps-u^\eps_h)\bigr)
+\bigl((\Psi^\eps-\Phi^\eps):(\sigma^\eps-\sigma^\eps_h),I_hz\bigr)\\
&\quad =\bigl(\sigma^\eps-\sigma^\eps_h),\Phi^\eps I_hz\bigr)
+\bigl(\Div(\Pi_h(\Phi^\eps I_h z)),D(u^\eps-u^\eps_h)\bigr)
+\bigl((\Psi^\eps-\Phi^\eps):(\sigma^\eps-\sigma^\eps_h),I_hz\bigr)\\
&\hspace{0.7in}
+\bigl(\Div(\Phi^\eps I_hz-\Pi_h(\Phi^\eps I_hz)),D(u^\eps-I_hu^\eps)\bigr)\\
&\quad =\bigl(\sigma^\eps-\sigma^\eps_h,\Phi^\eps I_hz-\Pi_h(\Phi^\eps I_hz)\bigr)
+\bigl((\Psi^\eps-\Phi^\eps):(\sigma^\eps-\sigma^\eps_h),I_hz\bigr)\\
&\hspace{0.7in} +\bigl(\Div(\Phi^\eps I_hz-\Pi_h(\Phi^\eps
I_hz)),D(u^\eps-I_hu^\eps)\bigr).
\end{align*}
Using this and the same technique used in Step 4 of Lemma \ref{lembound6} we have
\begin{align*}
&\frac{1}{\eps}\|D(u^\eps-u^\eps_h)\|_\lt^2
=\bigl(\Div(\sigma^\eps-\sigma^\eps_h),D(z-I_hz)\bigr)
-\frac{1}{\eps}\bigl(\Phi^\eps D(u^\eps-u^\eps_h),D(z-I_hz)\bigr)\\
&\hspace{0.2in}
+\frac{1}{\eps}\Bigl[\bigl((\Phi^\eps-\Psi^\eps):(\sigma^\eps-\sigma^\eps_h),I_hz\bigr)
+\bigl(\sigma^\eps-\sigma^\eps_h,\Pi_h(\Phi^\eps I_hz)-\Phi^\eps I_hz\bigr)\\
&\hspace{0.2in} +\bigl(\Div(\Pi_h(\Phi^\eps I_hz)-\Phi^\eps
I_hz),D(u^\eps-I_h u^\eps)\bigr)\Bigr]
+\bigl(\sigma^\eps_h-\sigma^\eps,\Pi_h\kappa-\kappa\bigr)\\
&\hspace{0.2in}
+\bigl(\Div(\kappa-\Pi_h\kappa),D(u^\eps-I_hu^\eps)\bigr)
\\
&\le \bigl[\|\Div(\sigma^\eps-\sigma^\eps_h)\|_\lt
+\frac{C}{\eps^2}\|D(u^\eps-u^\eps_h)\|_\lt\bigr]\|D(z-I_hz)\|_\lt\\
&\hspace{0.2in}
+\frac{C}{\eps}\bigl[\|\Phi^\eps-\Psi^\eps\|_\lt\|\sigma^\eps-\sigma^\eps_h\|_\lt\|I_hz\|_{L^\infty}
+\|\sigma^\eps-\sigma^\eps_h\|_\lt\|\Pi_h(\Phi^\eps I_hz)-\Phi^\eps I_hz\|_\lt\\
&\hspace{0.2in} +\|\Div(\Pi_h(\Phi^\eps I_hz)-\Phi^\eps I_hz)\|_\lt
\|D(u^\eps-I_hu^\eps)\|_\lt\bigr]
+\|\kappa-\Pi_h\kappa\|_\lt \|\sigma^\eps-\sigma^\eps_h\|_\lt\\
&\hspace{0.2in}
+\|\Div(\kappa-\Pi_h\kappa)\|_\lt\|D(u^\eps-I_hu^\eps)\|_\lt
\\
&\le C h^2\bigl(\|\sigma^\eps-\sigma^\eps_h\|_{H^1}
+\frac{1}{\eps^2}\|u^\eps-u^\eps_h\|_{H^1}\bigr)\|z\|_{H^3}\\
&\hspace{0.2in}
+\frac{C}{\eps^2}\bigl(\|\Phi^\eps-\Psi^\eps\|_\lt\|\sigma^\eps
-\sigma^\eps_h\|_\lt+h\|\sigma^\eps-\sigma^\eps_h\|_\lt
+\|u^\eps-I_hu^\eps\|_{H^1}\bigr)\|z\|_{H^3}\\
&\hspace{0.2in}
+Ch\|\sigma^\eps-\sigma^\eps_h\|_\lt\|\kappa\|_{H^1}
+C\|u^\eps-I_hu^\eps\|_{H^1}\|\kappa\|_{H^1} \\
&\le \Bigl\{\frac{(C_4(\eps)+C_5(\eps))h^{l-1}}{\eps^{\frac32}}\bigl[\|\sigma^\eps\|_{H^l}
+\|u^\eps\|_{H^l}\bigr]+\frac{C_4(\eps)h^{l-2}}{\eps^2}\|\Phi^\eps-\Psi^\eps\|_\lt\Bigr\} \|z\|_{H^3}\\
&\le C_b(\eps)\Bigl\{\frac{(C_4(\eps)+C_5(\eps))h^{l-1}}{\eps^{\frac32}}\bigl[\|\sigma^\eps\|_{H^l}+\|u^\eps\|_{H^l}\bigr] \\
&\hspace{0.2in}
+\frac{C_4(\eps)h^{l-2}}{\eps^2}\|\Phi^\eps-\Psi^\eps\|_\lt\Bigr\}\|D(u^\eps-u^\eps_h)\|_\lt.
\end{align*}

We now bound $\|\Phi^\eps-\Psi^\eps\|_\lt$ separately for the cases $n=2$ and
$n=3$.  First, when $n=2$ we have
\begin{align*}
\|\Phi^\eps-\Psi^\eps\|_\lt&=\|\text{cof}(\sigma^\eps)-\text{cof}(\sigma^\eps_h+\tau[\sigma^\eps-\sigma^\eps_h])\|_\lt\\
&=\|\sigma^\eps-(\sigma^\eps_h+\tau[\sigma^\eps-\sigma^\eps_h])\|_\lt\\
&\le C_4(\eps)h^{l-2}\bigl[\|\sigma^\eps\|_{H^l}+\|u^\eps\|_{H^l}\bigr].
\end{align*}
Second, when $n=3$, on noting that
\begin{align*}
|(\Phi^\eps-\Psi^\eps)_{ij}|
&=|(\text{cof}(\sigma^\eps))_{ij}-(\text{cof}(\sigma^\eps_h+\tau[\sigma^\eps-\sigma^\eps_h]))_{ij}|\\
&=|\text{det}(\sigma^\eps|_{ij})-\text{det}(\sigma^\eps|_{ij}
+\tau[\sigma^\eps|_{ij}-\sigma^\eps_h|_{ij}])|,
\end{align*}
and using the Mean Value Theorem and Sobolev inequality we get
\begin{align*}
\|(\Psi^\eps)_{ij}-(\Phi^\eps)_{ij}\|_\lt
&=(1-\tau)\|(\Lambda^\eps)^{ij}:(\sigma^\eps|_{ij}-\sigma^\eps_h|_{ij})\|_\lt\\
&\le \|(\Lambda^\eps)^{ij}\|_{H^1}\|\sigma^\eps|_{ij}-\sigma^\eps_h|_{ij}\|_{H^1},
\end{align*}
where
$(\Lambda^\eps)^{ij}=\text{cof}(\sigma^\eps|_{ij}+\lambda[\sigma^\eps_h|_{ij}
-\sigma^\eps|_{ij}])$ for $\lambda\in [0,1]$.  Since $(\Lambda^\eps)^{ij}\in
\mathbf{R}^{2\times 2}$, then
\begin{align*}
\|(\Lambda)^{ij}\|_{H^1}=\|\sigma^\eps|_{ij}+\lambda(\sigma^\eps_h|_{ij}
-\sigma^\eps|_{ij})\|_{H^1}\le C\|\sigma^\eps\|_{H^1}=O(\eps^{-1}).
\end{align*}
Thus,
\begin{align*}
\|\Phi^\eps-\Psi^\eps\|_\lt \le C_4(\eps) \eps^{-1} h^{l-2}
\bigl(\|\sigma^\eps\|_{H^l}+\|u^\eps\|_{H^l}\bigr).
\end{align*}

Finally, combining the above estimates
we obtain
\begin{align*}
\|D(u^\eps-u^\eps_h)\|_\lt \le C_4(\eps) \eps^{-2} \bigl[ h^{l-1}
+C_4(\eps) h^{2(l-2)}\bigr] \bigl(\|\sigma^\eps\|_{H^l}
+\|u^\eps\|_{H^l}\bigr).
\end{align*}
We note that $2(l-2)\geq l-1$ for $k\geq 2$. The proof is complete.
\end{proof}

\section{Numerical experiments and rates of convergence}\label{sec-5}

In this section, we provide several $2$-D numerical experiments
to gauge the efficiency of the mixed finite element method developed in
the previous sections. We numerically determine the ``best'' choice of
the mesh size $h$ in terms of $\eps$, and rates of convergence for
both $u^0-u^\eps$ and $u^\eps-u^\eps_h$. All tests given below are done
on domain $\Ome=[0,1]^2$. We refer the reader to \cite{Feng2,Neilan_thesis} 
for more extensive $2$-D and $3$-D numerical simulations. We like to 
remark that the mixed finite element methods we tested are often 
$10$--$20$ times faster than the Aygris finite element Galerkin 
method studied in \cite{Feng3}.

\subsection*{Test 1:}

For this test, we calculate $\|u^0-u^\eps_h\|$ for fixed $h=0.015$,
while varying $\epsilon$ in order to estimate $\|u^\eps-u^0\|$. We
use quadratic Lagrange element for both variables and solve problem
\eqref{prob1}--\eqref{prob2} with the following test functions:
%
%
\begin{alignat*}{4}
&\text{(a). } u^0 =\frac14 e^{\frac{x^2+y^2}2}, &&\quad
f=(1+x^2+y^2)e^{\frac{x^2+y^2}2}, &&\quad g=e^{\frac{x^2+y^2}2},\\
&\text{(b). } u^0 =x^4+y^2, &&\quad f=24x^2, &&\quad g=x^4+y^2.
\end{alignat*}

After having computed the error, we divide it by various powers of
$\epsilon$ to estimate the rate at which each norm converges.
Tables \ref{test1ae1} and \ref{test1be1} clearly show that
$\|\sigma^0-\sigma^\eps_h\|_\lt=O(\eps^{\frac14})$. Since
$h$ is very small, we then have
$\|u^0-u^\eps\|_\htw\approx \|\sigma^0-\sigma^\eps_h\|_\lt
=O(\eps^{\frac14})$.  Based on this heuristic argument, we predict
that $\|u^0-u^\eps\|_\htw=O(\eps^{\frac14})$.  Similarly, from
Tables \ref{test1ae1} and \ref{test1be1}, we see that
$\|u^0-u^\eps\|_\lt\approx O(\eps)$ and
$\|u^0-u^\eps\|_{H^1}\approx O(\eps^{\frac12})$.
\begin{figure}[tbh]
\begin{center}
\includegraphics[angle=0,width=5cm,height=4cm]{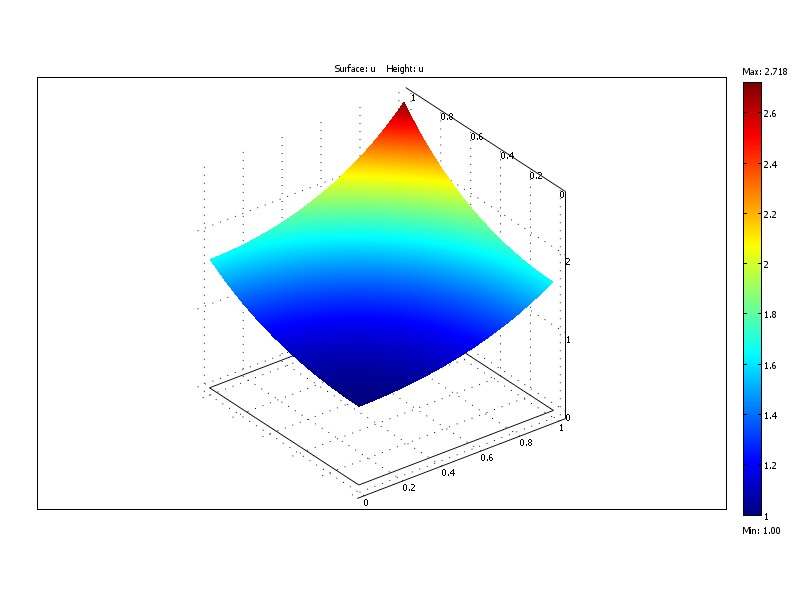}
\includegraphics[angle=0,width=5cm,height=4cm]{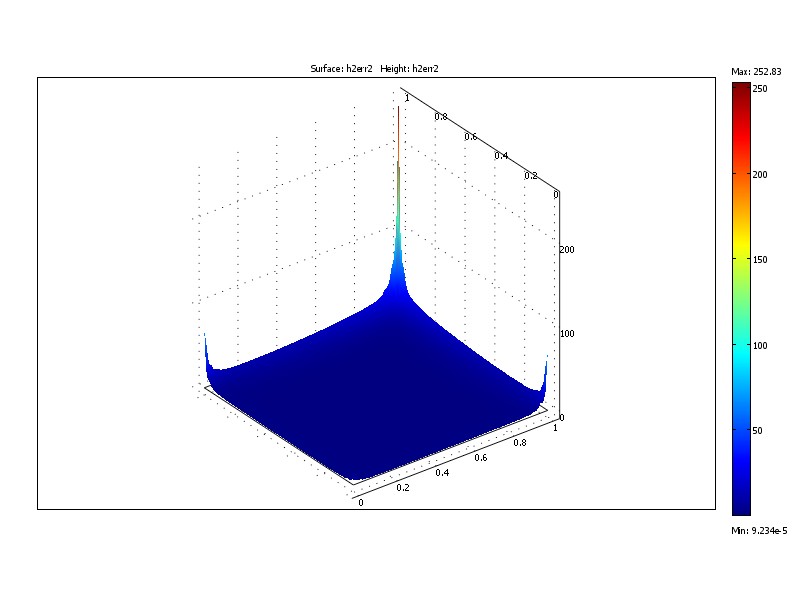}
\end{center}
\caption{{\scriptsize Test 1a.  Computed solution $u^\vepsi_h$ (left)
and its $L^2$-error (right)\, ($\eps$=0.05)}}\label{figpic1}
\end{figure}

\begin{table}[tbh]
\begin{center}
\begin{tabular}{|l|l|l|l|}\hline
$\eps$ & $\|u_h^\eps-u^0\|_\lt$ & $\|u_h^\eps-u^0\|_{H^1}$ &
$\|\sigma^\eps_h-\sigma^0\|_\lt$\\
\hline0.75&    0.031968735& 0.168237927 &1.412579201\\
\hline0.5 &0.038716921 &0.196397556 &1.559234748\\
\hline0.25& 0.040987803 &0.206004854 &1.644877503\\
\hline0.1 &0.032218007 &0.168139823 &1.541246898\\
\hline0.075 &0.028113177& 0.150389494& 1.480968264\\
\hline0.05 &0.022258985& 0.124863926& 1.386775396\\
\hline0.025 &0.013676045& 0.086203248& 1.217747100\\
\hline0.0125 &0.007816727 &0.057280014 &1.052222885\\
\hline0.005 &0.003511072 &0.032109189& 0.853140082\\
\hline0.0025 &0.001863935& 0.020252025& 0.722844382\\
\hline0.00125 &0.000973479 &0.012568349 &0.611218455\\
\hline0.0005 &0.000404799& 0.006544116& 0.492454059\\
\hline
\end{tabular}
\caption{{\scriptsize Test 1a: Change of $\|u^0-u^\eps_h\|$
w.r.t. $\eps$\, ($h=0.015$)}} \label{test1a}
\end{center}
\end{table}

\begin{table}[tbh]
\begin{center}
\begin{tabular}{|l|l|l|l|}\hline
$\eps$ & $\frac{\|u_h^\eps-u^0\|_\lt}{\eps}$ & $\frac{\|u_h^\eps-u^0\|_{H^1}}{\sqrt{\eps}}$ & $\frac{\|\sigma^\eps_h-\sigma^0\|_\lt}{\sqrt[4]{\eps}}$\\
\hline0.75  &  0.04262498 & 0.194264425& 1.517915136\\
\hline0.5& 0.077433843 &0.277748087 &1.854253057\\
\hline0.25  &  0.163951212& 0.412009709& 2.326208073\\
\hline0.1 &0.322180074 &0.531704805& 2.740767624\\
\hline0.075&   0.374842355& 0.54914479 & 2.829960907\\
\hline0.05  &  0.445179694& 0.558408453& 2.932672906\\
\hline0.025  & 0.54704179& 0.545197212& 3.062471825\\
\hline0.0125  &0.625338155 &0.51232802  &3.146880418\\
\hline 0.005&   0.702214497&0.454092502 &3.208321232\\
\hline 0.0025&  0.745574141 &0.405040492 &3.232658349\\
\hline0.00125 &0.778783297 &0.355486596 &3.250640603\\
\hline0.0005 & 0.809598913& 0.29266175 & 3.293238774\\
\hline\end{tabular}
\caption{{\scriptsize Test 1a: Change of $\|u^0-u^\eps_h\|$ w.r.t.
$\eps$\, ($h=0.015$)}} \label{test1ae1}
\end{center}
\end{table}

\begin{table}[tbh]
\begin{center}
\begin{tabular}{|l|l|l|l|}\hline
$\eps$ & $\|u_h^\eps-u^0\|_\lt$ & $\|u_h^\eps-u^0\|_{H^1}$ &
$\|\sigma^\eps_h-\sigma^0\|_\lt$\\
\hline0.75  &  0.080523289& 0.441995475& 3.65931592\\
\hline0.5& 0.082589346& 0.448160685 &3.706413496\\
\hline0.25  &  0.074746237& 0.412192916& 3.603993202\\
\hline0.1& 0.051429563 &0.309140745 &3.233364656\\
\hline0.075  & 0.043554563 &0.273452007& 3.091264143\\
\hline0.05  &  0.033436507& 0.226024335& 2.885806127\\
\hline0.025 &  0.020115546& 0.158107558& 2.538905473\\
\hline0.0125  &0.011590349& 0.107777549 &2.211633785\\
\hline0.005 &  0.005376049& 0.06303967 & 1.820550192\\
\hline0.0025 & 0.002939459& 0.041182521 &1.559730105\\
\hline0.00125 &0.001580308 &0.026467488 &1.330131572\\
\hline0.0005&  0.000679181& 0.014385878& 1.075465946\\
\hline
\end{tabular}
\caption{{\scriptsize Test 1b: Change of $\|u^0-u^\eps_h\|$
w.r.t. $\eps$\, ($h=0.015$)}} \label{test1b}
\end{center}
\end{table}

\begin{table}[tbh]
\begin{center}
\begin{tabular}{|l|l|l|l|}\hline
$\eps$ & $\frac{\|u_h^\eps-u^0\|_\lt}{\eps}$ & $\frac{\|u_h^\eps-u^0\|_{H^1}}{\sqrt{\eps}}$ & $\frac{\|\sigma^\eps_h-\sigma^0\|_\lt}{\sqrt[4]{\eps}}$\\
\hline0.75  &  0.107364385& 0.510372413& 3.932190858\\
\hline0.5& 0.165178691 &0.63379492&  4.4076933\\
\hline0.25   & 0.298984949& 0.824385832 &5.096816065\\
\hline0.1& 0.514295635& 0.977588871& 5.749825793\\
\hline0.075 &  0.580727513& 0.99850555&  5.907052088\\
\hline0.05  &  0.668730140& 1.010811555& 6.102736940\\
\hline0.025 &  0.804621849& 0.999959999 &6.385009233\\
\hline0.0125 & 0.927227955& 0.963991701& 6.614327771\\
\hline0.005 &  1.075209747& 0.891515564& 6.846366682\\
\hline0.0025  &1.175783722& 0.823650411& 6.975325082\\
\hline0.00125 &1.264246558 &0.748613599& 7.074033284\\
\hline0.0005 & 1.358362838 &0.643356045& 7.192074244\\
\hline
\end{tabular}
\caption{{\scriptsize Test 1b: Change of $\|u^0-u^\eps_h\|$ w.r.t. $\eps$\,
($h=0.015$)}} \label{test1be1}
\end{center}
\end{table}

\subsection*{Test 2:}
The purpose of this test is to calculate the rate of convergence of
$\|u^\eps-u^\eps_h\|$ for fixed $\eps$ in various norms. We use
quadratic Lagrange element for both variables and solve
problem \eqref{prob1}--\eqref{prob2} with boundary
condition $D^2u^\vepsi\nu\cdot\nu =\vepsi$ on $\p\Ome$ being
replaced by $D^2u^\vepsi\nu\cdot\nu =h_\vepsi$ on $\p\Ome$
and using the following test functions:
%
%
\begin{alignat*}{2}
\text{(a). }  u^\eps &=20x^6+y^6,
&&\quad f^\eps=18000x^4y^4-\eps(7200x^2+360y^2), \\
g^\eps &=20x^6+y^6, &&\quad h^\eps=600x^4\nu_x^2+30y^4\nu_y^2.\\
\text{(b). }  u^\eps &=x\text{sin}(x)+y\text{sin}(y),
&&\quad f^\eps=(2\text{cos}(x)-x\text{sin}(x))(2\text{cos}(y)-y*\text{sin}(y))
\\
& &&\qquad\qquad
 -\eps(x\text{sin}(x)-4\text{cos}(x)+y\text{sin}(y)-4\text{cos}(y)),\\
g^\eps &=x\text{sin}(x)+y\text{sin}(y),
&&\quad h^\eps=(2\text{cos}(x)-x\text{sin}(x))\nu_x^2+(2\text{cos}(y)-y\text{sin}(y))\nu_y^2.
\end{alignat*}

After having computed the error in different norms, we divided
each value by a power of $h$ expected to be the convergence rate
by the analysis in the previous section.  As seen from Tables
\ref{test2a2} and \ref{test2b2}, the error converges exactly as
expected in $H^1$-norm, but $\sigma^\eps_h$ appears to converge
one order of $h$ better than the analysis shows. In addition, the
error seems to converge optimally in $L^2$-norm although a
theoretical proof of such a result has not yet been proved.
\begin{figure}[tbh]
\begin{center}
\includegraphics[angle=0,width=5cm,height=3.85cm]{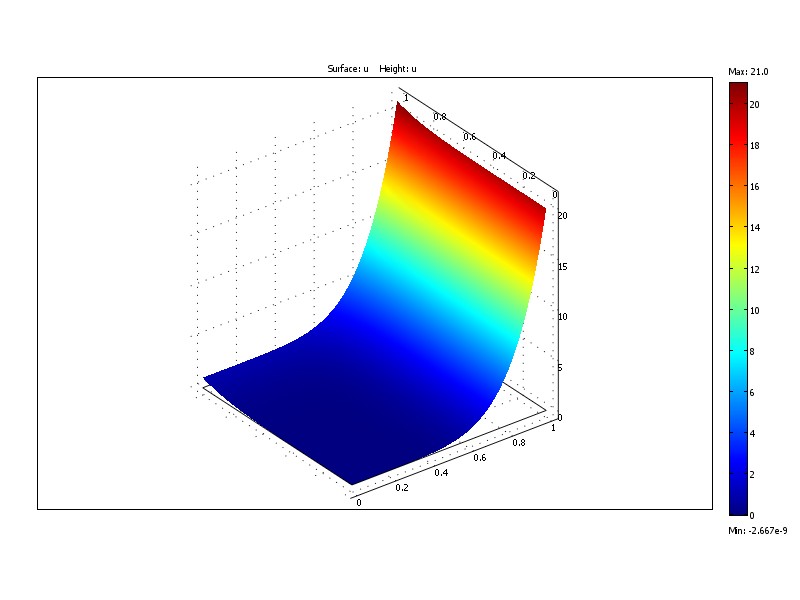}
\includegraphics[angle=0,width=5cm,height=3.85cm]{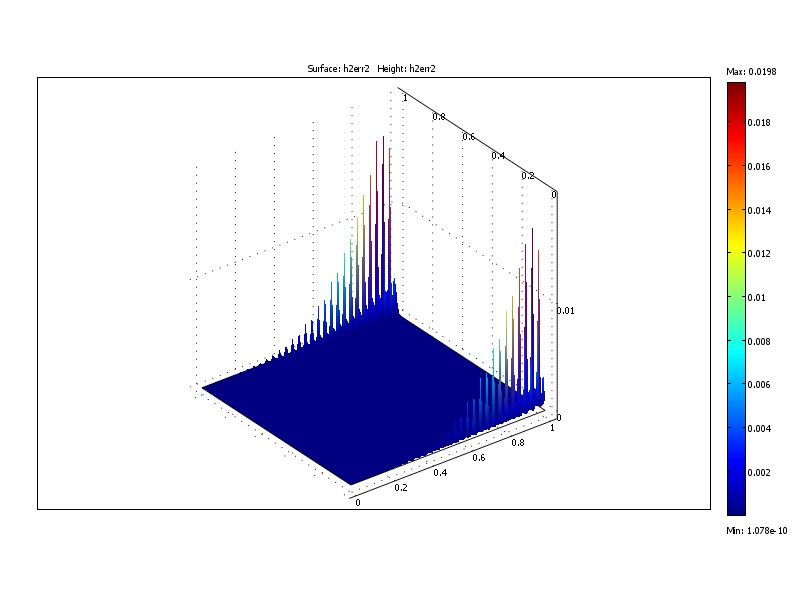}
\end{center}
\caption{{\scriptsize Test 1a.  Computed Solution $u^\vepsi_h$ and its
$L^2$-error\, ($h=0.015$)}} \label{figpic2}
\end{figure}

\begin{table}[tbh]
\begin{center}
\begin{tabular}{|l|c|c|c|c|c|}\hline $n$ & $h$ &
$\|u^\eps-u^\eps_h\|_\lt$ & $\|u^\eps-u^\eps_h\|_{H^1}$ &
$\|\sigma^\eps-\sigma^\eps_h\|_\lt$ &
$\|\sigma^\eps-\sigma^\eps_h\|_{H^1}$\\
\hline10  &0.1 &0.004334849 &0.335913679& 0.083695878 &5.995796194\\
\hline20& 0.05& 0.000545090&  0.084457090  &0.011891926 &1.813405912\\
\hline30& 0.033333333& 0.000161694 &0.037576588& 0.003840822&
0.916912755\\
\hline40& 0.025& 6.82423E-05& 0.021145181 &0.001747951 &0.574128035\\
\hline50& 0.02 &3.49467E-05& 0.013535235& 0.000959941& 0.403471189\\
\hline
\end{tabular}
\caption{{\scriptsize Test 2a: Change of $\|u^\eps-u^\eps_h\|$ w.r.t. $h$ \,
($\eps=0.001$)}} \label{test2a}
\end{center}
\end{table}

\begin{table}[tbh]
\begin{center}
\begin{tabular}{|l|c|c|c|c|}\hline $n$ & $h$& $\frac{\|u^\eps-u^\eps_h\|_\lt}{h^3}$ &
$\frac{\|u^\eps-u^\eps_h\|_{H^1}}{h^2}$ &
$\frac{\|\sigma^\eps-\sigma^\eps_h\|_\lt}{h}$\\
\hline10 & 0.1 &4.334849478& 33.59136794& 0.83695878\\
\hline20&  0.05 &4.360719207& 33.78283588& 0.237838517\\
\hline30 & 0.033333333& 4.365734691& 33.81892961& 0.115224646\\
\hline40 & 0.025& 4.367510244& 33.83229037& 0.069918055\\
\hline50  &0.02 &4.368335996& 33.83808875& 0.047997042\\
\hline
\end{tabular}
\caption{{\scriptsize Test 2a: Change of $\|u^\eps-u^\eps_h\|$ w.r.t.
$h$\, ($\eps=0.001$)}} \label{test2a2}
\end{center}\end{table}

\begin{table}[tbh]
\begin{center}
\begin{tabular}{|l|c|c|c|c|c|}\hline $n$ & $h$ &
$\|u^\eps-u^\eps_h\|_\lt$ & $\|u^\eps-u^\eps_h\|_{H^1}$ &
$\|\sigma^\eps-\sigma^\eps_h\|_\lt$ & $\|\sigma^\eps-\sigma^\eps_h\|_{H^1}$\\
\hline10  &0.1& 1.34918E-05 &0.001045141& 6.86623E-05& 0.005995181\\
\hline20 &0.05 &1.68723E-06& 0.000261390 & 1.19992E-05 &0.002165423\\
\hline30 &0.033333333 &4.99964E-07& 0.000116182& 4.33789E-06&
0.001185931\\
\hline40 &0.025 &2.10928E-07 &6.53541E-05 &2.10913E-06& 0.000772419\\
\hline50& 0.02 &1.07997E-07 &4.18271E-05 &1.20594E-06& 0.000553558\\
\hline
\end{tabular}
\caption{{\scriptsize Test 2b: Change of $\|u^\eps-u^\eps_h\|$ w.r.t.
$h$\, ($\eps=0.001$)}} \label{test2b}
\end{center}
\end{table}

\begin{table}[tbh]
\begin{center}
\begin{tabular}{|l|c|c|c|c|}\hline $n$ & $h$ &
$\frac{\|u^\eps-u^\eps_h\|_\lt} {h^3}$ &
$\frac{\|u^\eps-u^\eps_h\|_{H^1}}{h^2}$ &
$\frac{\|\sigma^\eps-\sigma^\eps_h\|_\lt}{h}$\\ \hline10 & 0.1&
0.013491783& 0.104514066& 0.000686623\\
\hline20 & 0.05  &  0.013497875& 0.104556106& 0.000239985\\
\hline30& 0.033333333& 0.013499020&  0.104563889 &0.000130137\\
\hline40 &0.025& 0.013499422& 0.104566604 &8.43651E-05\\
\hline50  &0.02 &0.013499606& 0.104567861& 6.02971E-05\\
\hline
\end{tabular}
\caption{{\scriptsize Test 2b: Change of $\|u^\eps-u^\eps_h\|$ w.r.t.
$h$\, ($\eps=0.001$)}} \label{test2b2}
\end{center}
\end{table}

\subsection*{Test 3}
In this test, we fix a relation between $\epsilon$ and $h$, and then
determine the ``best'' choice for $h$ in terms of $\epsilon$ such that
the global error $u^0-u^\eps_h$ has the same convergence rate as
that of $u^0-u^\eps$. We solve problem \eqref{prob1}--\eqref{prob2}
with the following test functions:
\begin{alignat*}{4}
\text{(a). } u^0 &=x^4+y^2, &&\quad f=24x^2, &&\quad g=x^4+y^2.\\
\text{(b). } u^0 &=20x^6+y^6, &&\quad f=18000x^4y^4, &&\quad
g=20x^6+y^6.
\end{alignat*}

To see which relation gives the sought-after convergence rate, we
compare the data with a function, $y=\beta x^\alpha$, where
$\alpha=1$ in the $L^2$-case, $\alpha=\frac12$ in the $H^1$-case, and
$\alpha=\frac14$ in the $H^2$-case.  The constant, $\beta$ is
determined using a least squares fitting algorithm based on the data.

As seen in the figures below, the best $h-\eps$ relation depends on
which norm one considers.  Figures \ref{fig1}-\ref{fig2} and
\ref{fig5}-\ref{fig6} indicate that when $h=\eps^{\frac12}$,
$\|u^0-u^\eps_h\|_\lt\approx O(\eps)$ and
$\|\sigma^0-\sigma^\eps_h\|_\lt \approx
O(\eps^{\frac14})$.  It can also be seen from Figures
$\ref{fig3}-\ref{fig4}$ that when $h=\eps$,
$\|u^0-u^\eps_h\|_{H^1}=O(\eps^{\frac12})$.
\begin{figure}[tbh]
\begin{center}
\includegraphics[angle=0,width=6.25cm,height=5cm]{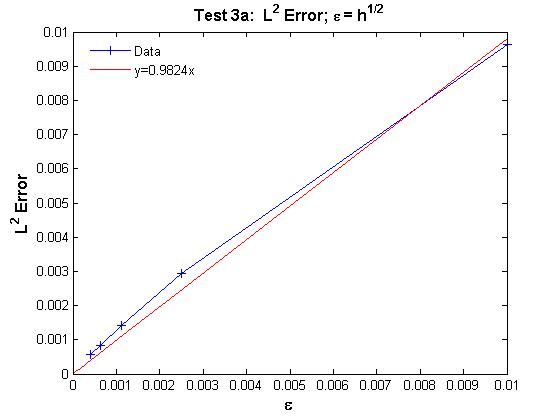}
\includegraphics[angle=0,width=6.25cm,height=5cm]{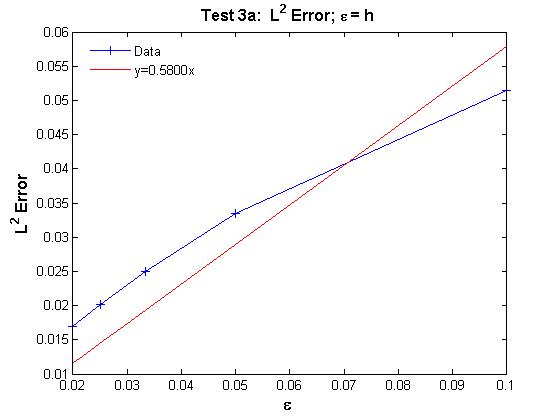}\\
\vspace{1cm}
\includegraphics[angle=0,width=6.25cm,height=5cm]{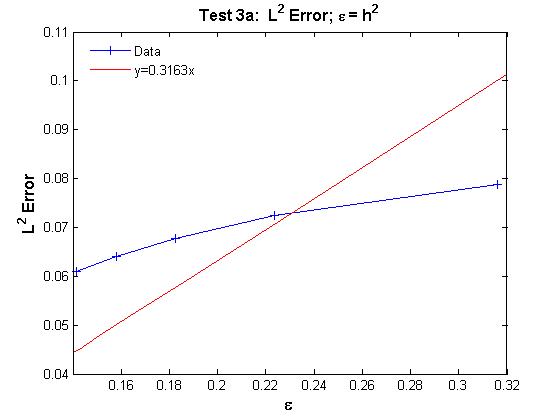}
\includegraphics[angle=0,width=6.25cm,height=5cm]{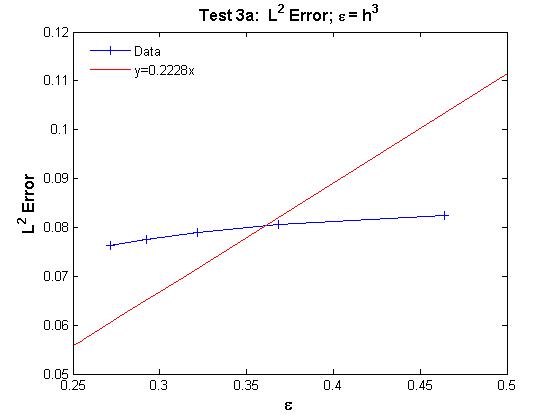}
\end{center}
\caption{{\scriptsize Test 3a.  $L^2$-error of $u^\vepsi_h$ }}\label{fig1}
\end{figure}

\begin{figure}[tbh]
\begin{center}
\includegraphics[angle=0,width=6.25cm,height=5cm]{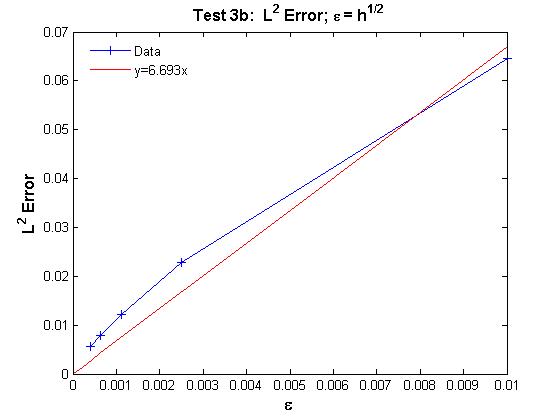}
\includegraphics[angle=0,width=6.25cm,height=5cm]{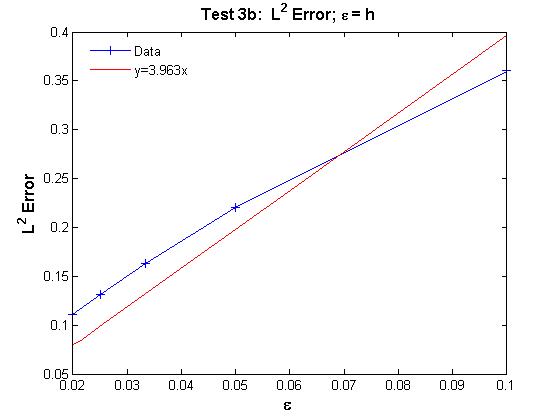}\\
\vspace{1cm}
\includegraphics[angle=0,width=6.25cm,height=5cm]{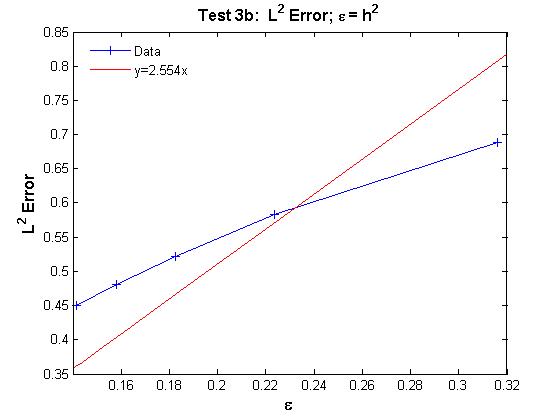}
\includegraphics[angle=0,width=6.25cm,height=5cm]{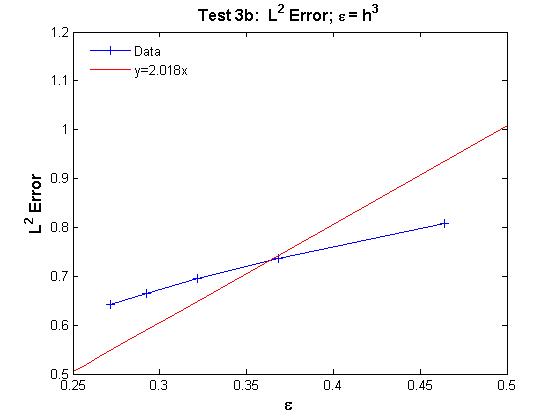}
\end{center}
\caption{{\scriptsize Test 3b. $L^2$-error of $u^\vepsi_h$ }}\label{fig2}
\end{figure}

\begin{figure}[tbh]
\begin{center}
\includegraphics[angle=0,width=6.25cm,height=5cm]{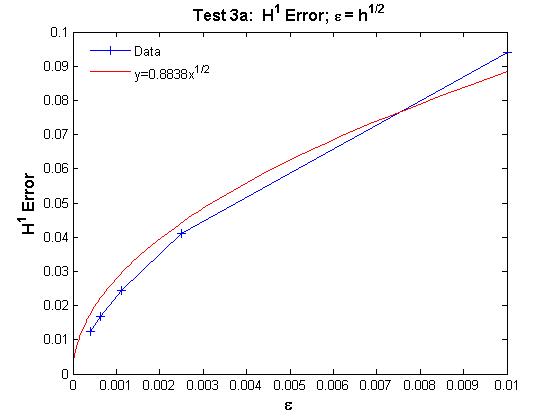}
\includegraphics[angle=0,width=6.25cm,height=5cm]{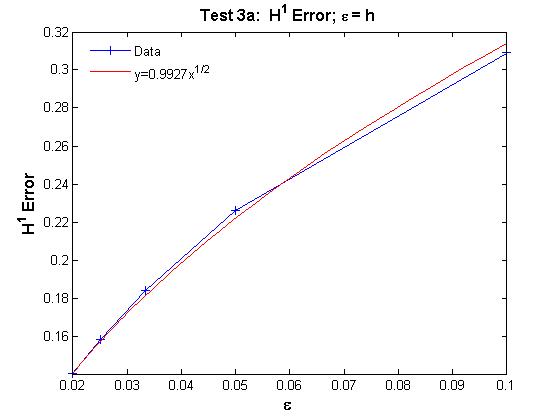}\\
\vspace{1cm}
\includegraphics[angle=0,width=6.25cm,height=5cm]{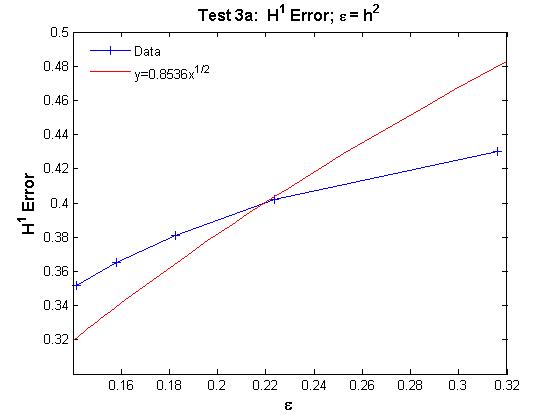}
\includegraphics[angle=0,width=6.25cm,height=5cm]{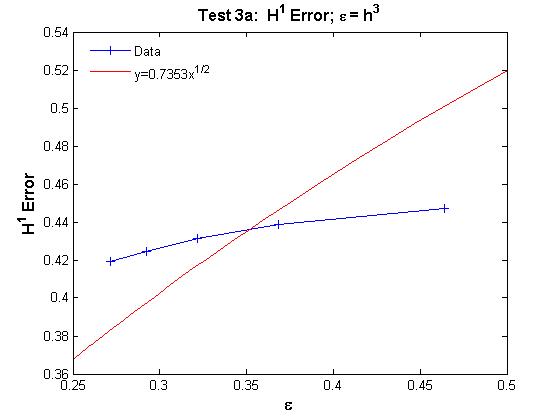}
\end{center}
\caption{\label{fig3}{\scriptsize Test 3a. $H^1$-error of $u^\vepsi_h$ }}
\end{figure}

\begin{figure}[tbh]
\begin{center}
\includegraphics[angle=0,width=6.25cm,height=5cm]{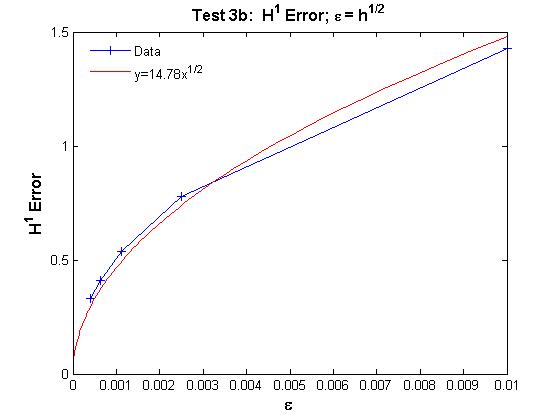}
\includegraphics[angle=0,width=6.25cm,height=5cm]{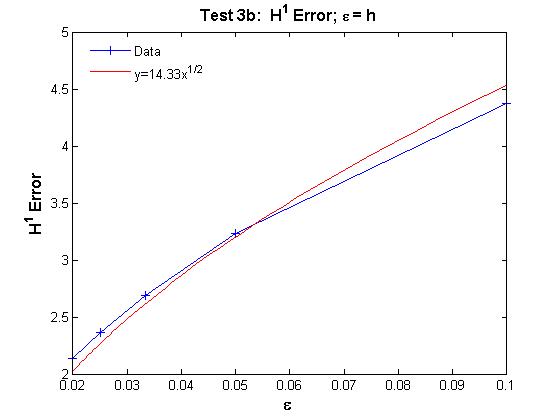}\\
\includegraphics[angle=0,width=6.25cm,height=5cm]{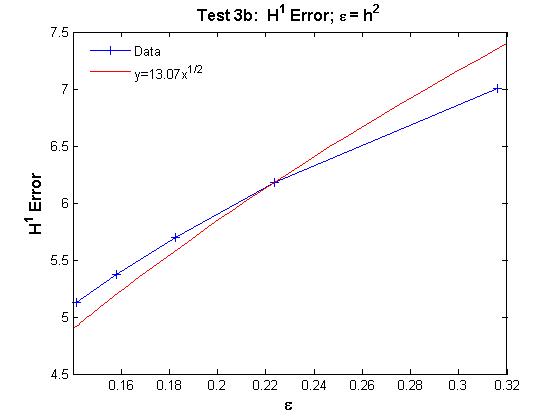}
\includegraphics[angle=0,width=6.25cm,height=5cm]{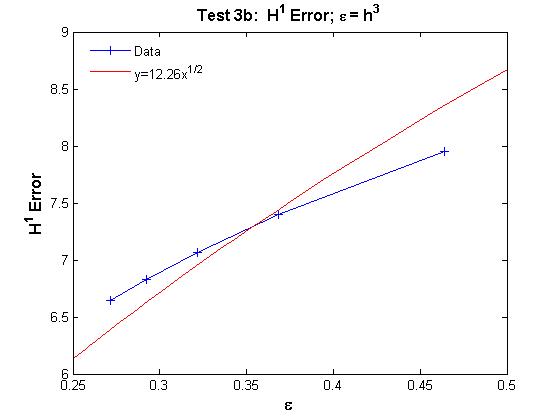}
\end{center}
\caption{{\scriptsize Test 3b. $H^1$-error of $u^\vepsi_h$}} \label{fig4}
\end{figure}

\begin{figure}[tbh]
\begin{center}
\includegraphics[angle=0,width=6.25cm,height=5cm]{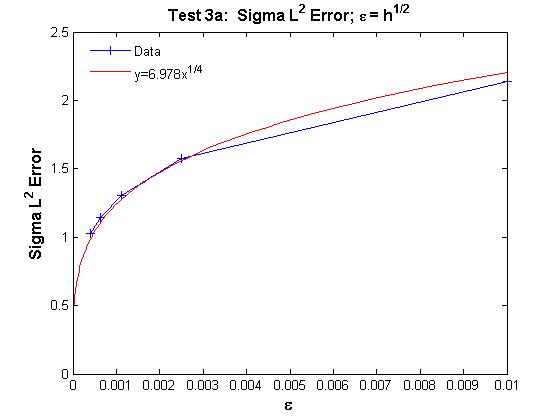}
\includegraphics[angle=0,width=6.25cm,height=5cm]{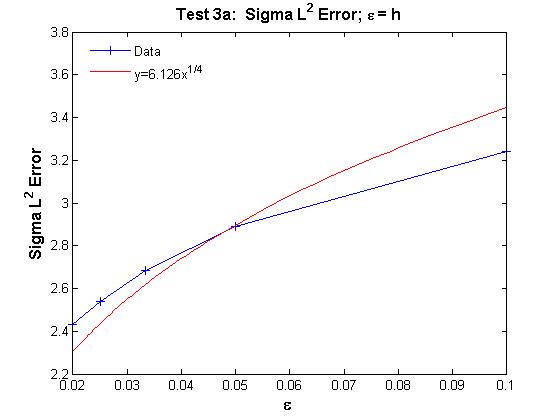}\\
\vspace{1cm}
\includegraphics[angle=0,width=6.25cm,height=5cm]{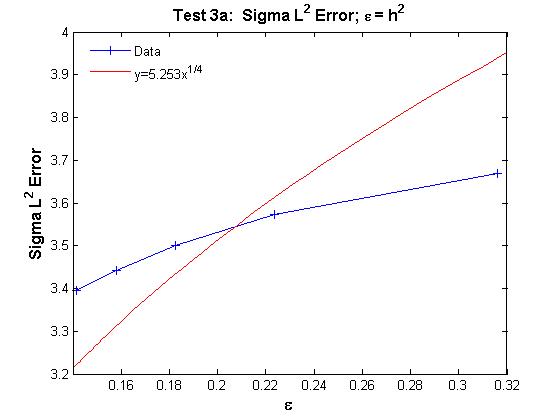}
\includegraphics[angle=0,width=6.25cm,height=5cm]{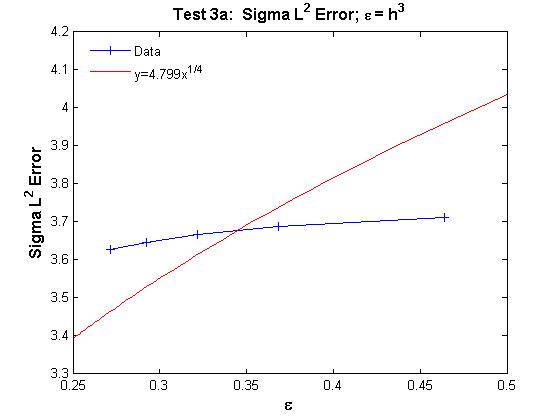}
\end{center}
\caption{{\scriptsize Test 3a. $L^2$-error of $\sigma^\vepsi_h$}}
\label{fig5}
\end{figure}

\begin{figure}[tbh]
\begin{center}
\includegraphics[angle=0,width=6.25cm,height=5cm]{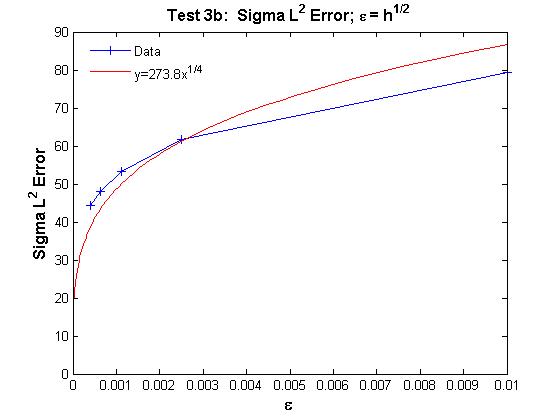}
\includegraphics[angle=0,width=6.25cm,height=5cm]{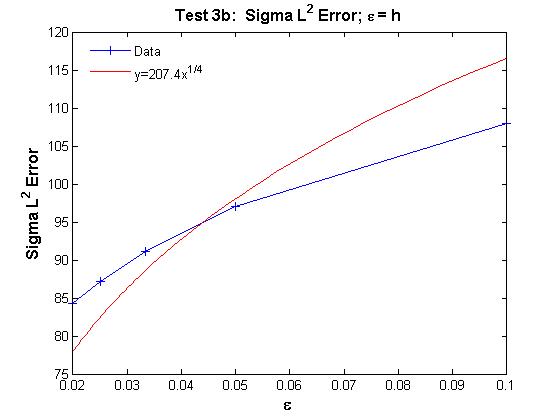}\\
\vspace{1cm}
\includegraphics[angle=0,width=6.25cm,height=5cm]{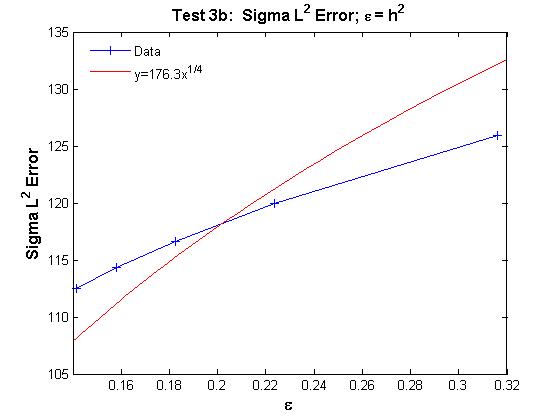}
\includegraphics[angle=0,width=6.25cm,height=5cm]{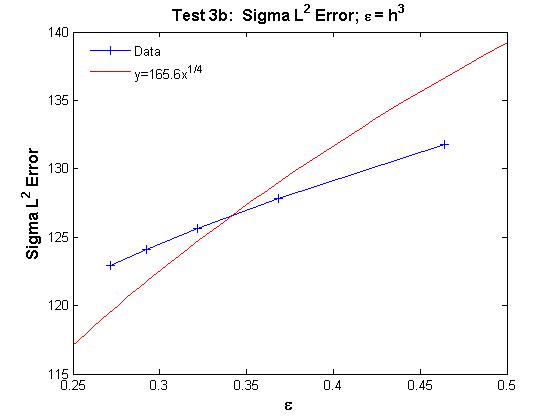}
\end{center}
\caption{{\scriptsize Test 3b. $L^2$-error of $\sigma^\vepsi_h$}}
\label{fig6}
\end{figure}
ÿ


\end{document}